\newtheorem{theorem}{Theorem}[section]
\newtheorem{lemma}[theorem]{Lemma}
\newtheorem{corollary}[theorem]{Corollary}
\newtheorem{conjecture}[theorem]{Conjecture}
\theoremstyle{question}
\theoremstyle{definition}
\theoremstyle{remark}
\newtheorem{remark}[theorem]{Remark}
\newtheoremstyle{cited}{.5\baselineskip\@plus.2\baselineskip\@minus.2\baselineskip}{.5\baselineskip\@plus.2\baselineskip\@minus.2\baselineskip}{\itshape}{}{\bfseries}{\bfseries .}{5pt plus 1pt minus 1pt}{\thmname{#1}\thmnumber{~#2}\thmnote{ \normalfont#3}}
\theoremstyle{cited}
\newtheoremstyle{citeddef}{.5\baselineskip\@plus.2\baselineskip\@minus.2\baselineskip}{.5\baselineskip\@plus.2\baselineskip\@minus.2\baselineskip}{}{}{\bfseries}{\bfseries .}{5pt plus 1pt minus 1pt}{\thmname{#1}\thmnumber{~#2}\thmnote{ \normalfont#3}}
\theoremstyle{citeddef}
\newcommand{\Q}{\mathbb Q}
\newcommand{\Sing}{\operatorname{Sing}}
\newcommand{\codim}{\operatorname{codim}}
\newcommand{\supp}{\operatorname{supp}}
\newcommand{\dashedlongrightarrow}{\xymatrix@1@=15pt{\ar@{-->}[r]&}}
\renewcommand{\longrightarrow}{\xymatrix@1@=15pt{\ar[r]&}}
\renewcommand{\mapsto}{\xymatrix@1@=15pt{\ar@{|->}[r]&}}
\renewcommand{\twoheadrightarrow}{\xymatrix@1@=15pt{\ar@{->>}[r]&}}
\newcommand{\hooklongrightarrow}{\xymatrix@1@=15pt{\ar@{^(->}[r]&}}
\newcommand{\congpf}{\xymatrix@1@=15pt{\ar[r]^-\sim&}}
\renewcommand{\cong}{\simeq}
\newcommand{\onto}{\rightarrow\hspace*{-.14in}\rightarrow}
\begin{document}  
\title[One-forms on varieties of Kodaira codimension one]{Nowhere vanishing holomorphic one-forms on varieties of Kodaira codimension one}

\author{Feng Hao}

\address{Department of Mathematics, KU Leuven, Celestijnenlaan 200b-box 2400, 3001 Leuven, Belgium.}
\email{feng.hao@kuleuven.be}
\date{\today}
\subjclass[2010]{primary 14E30,	14E20, 58A10; secondary 32Q57} 
%
%

\keywords{Holomorphic one-forms, minimal model program, classification, morphisms to abelian varieties}

\begin{abstract} 
Based on the celebrated result on zeros of holomorphic 1-forms on complex varieties of general type by Popa and Schnell, we study holomorphic 1-forms on $n$-dimensional varieties of Kodaira dimension $n-1$. We show that a  complex minimal smooth projective variety $X$ of Kodaira dimension $\kappa(X)=\dim X-1$ admits a holomorphic 1-form without zero if and only if there is a smooth morphism from $X$ to an elliptic curve.  Furthermore, for a general smooth projective variety (not necessarily minimal) $X$ of Kodaira codimension one, we give a structure theorem for $X$ given that $X$ admits a holomorphic 1-form without zero.
\end{abstract}

\maketitle

\section{Introduction}\label{intro}

Popa and Schnell \cite{PS14} showed that any holomorphic 1-form on a complex smooth projective variety of general type vanishes at some point. This indicates that the existence of nowhere vanishing holomorphic 1-forms encodes much algebro-geometric information of irregular smooth complex projective varieties. Also, one can refer to results, e.g., \cite{GL87}, \cite{Ca74}, \cite{Zh97}, \cite{LZ05}, \cite{HK05}, prior to \cite{PS14} in this direction. From another point of view, it was observed by Kotschick \cite{Ko22} and Schreieder \cite{Sch21} that the existence of nowhere vanishing holomorphic 1-forms on smooth projective varieties has a strong restriction on the topology of varieties (see e.g., \cite{HS21(1)}, \cite{DHL21}, \cite{SY22} for more results in this direction). We will focus on the algebro-geometric aspects on Kodaira codimension one varieties with  nowhere vanishing holomorphic 1-forms. Also, all varieties in this article are defined over the field of complex numbers.

First of all, we have the following theorem which is a generalization of results for surface case in \cite{Sch21} and threefold case in \cite{HS21(1)}. It can also be regarded as the next step after the result of \cite{HK05}, which shows that any minimal smooth projective variety of general type does not admit a nowhere vanishing holomorphic 1-form.

\begin{theorem}\label{Theorem:main A}
	Let $X$ be a minimal smooth projective variety with Kodaira dimension $\kappa(X)=\dim X-1$. Then $X$ admits a nowhere vanishing holomorphic 1-form $\omega\in H^0(X, \Omega_X^1)$ if and only if $X$ admits a smooth morphism to an elliptic curve. 
\end{theorem}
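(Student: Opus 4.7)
\medskip
\noindent\textbf{Proof proposal.}

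For the ``if'' direction: given a smooth morphism $p \colon X \to E$ to an elliptic curve $E$ and a nonzero invariant $1$-form $\eta_E$ on $E$, surjectivity of $dp_x \colon T_xX \to T_{p(x)}E$ at every $x$ implies $(p^*\eta_E)_x = \eta_{E,p(x)} \circ dp_x \ne 0$ everywhere, so $p^*\eta_E$ is nowhere vanishing. The \emph{converse} of this calculation is the pivot of the ``only if'' direction: if $\omega = p^*\eta_E$ is nowhere vanishing on $X$ and $\eta_E$ is nowhere vanishing on $E$, then $dp_x$ is nonzero, hence surjective since $\dim E = 1$, at every $x$; so $p$ is automatically smooth. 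Thus for the ``only if'' direction it suffices to produce a surjective morphism $p \colon X \to E$ to some elliptic curve $E$ with $\omega = p^*\eta_E$ for some nonzero $\eta_E \in H^0(E, \Omega_E^1)$.

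My plan is to factor $\omega$ through the Albanese morphism $a \colon X \to A := \mathrm{Alb}(X)$, writing $\omega = a^*\eta$ for the unique $\eta \in H^0(A, \Omega_A^1)$. The task reduces to producing a surjective homomorphism $q \colon A \to E$ to an elliptic curve with $\eta \in q^* H^0(E, \Omega_E^1)$; then $p := q \circ a$ will do. To locate $E$ I consider the Iitaka fibration $\varphi \colon X \to Y$ (on a suitable birational model): by $\kappa(X) = \dim X - 1$, a general fiber $F$ of $\varphi$ is an elliptic curve and $\kappa(Y) = \dim Y$, so a smooth model of $Y$ is of general type. I then split on the restriction $\omega|_F$. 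If $\omega|_F \equiv 0$ for a general fiber $F$, then $\omega$ descends to a holomorphic $1$-form $\omega_Y$ on a smooth birational model of $Y$; by Popa--Schnell \cite{PS14}, $\omega_Y$ vanishes at some point $y_0$, so $\omega$ vanishes identically on any smooth fiber over $y_0$, contradicting the hypothesis. Hence $\omega|_F$ is a nonzero invariant $1$-form on the general elliptic fiber, so $a|_F$ is an isogeny from $F$ onto a translate of a fixed sub-elliptic-curve $E \subset A$, and $\eta$ has nontrivial restriction to $E$. Using Poincar\'e reducibility $A \sim E \times A'$ up to isogeny, I would show that the $A'$-component of $\eta$ vanishes; projection $A \to E$ then furnishes the desired $q$.

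\emph{Main obstacle.} The crux is proving that the $A'$-component of $\eta$ is zero, i.e.\ that the $E$-direction dictated by the Iitaka fibers controls $\eta$ globally. This is where the minimality hypothesis on $X$ enters decisively, in the spirit of Hacon--K\'ovacs \cite{HK05}, to exclude configurations where the complementary direction in $A$ absorbs zeros of $\omega$ through singular or multiple fibers of $\varphi$. I expect this step to combine the Ueno fibration of $a(X)$ inside $A$, generic vanishing on $X$, and a refined Popa--Schnell analysis applied to auxiliary subvarieties of $X$ arising from the $A'$-direction, generalizing the surface and threefold arguments of \cite{Sch21} and \cite{HS21(1)} to arbitrary dimension.
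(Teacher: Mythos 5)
Your ``if'' direction, and the observation that a nowhere vanishing pullback of a $1$-form from a curve forces smoothness, are fine; the first half of your ``only if'' argument (using Popa--Schnell to rule out $\omega|_F\equiv 0$ on the general Iitaka fibre, so that the elliptic fibres map isogenously onto a fixed elliptic curve $E\subset \Alb(X)$) is essentially the paper's Lemma \ref{prop:reduce to isotrivial}. The gap is in the step you yourself flag as the crux, and it is not merely unfinished: the statement you reduce to is false. You want a surjection $q\colon A\to E$ with $\eta\in q^*H^0(E,\Omega_E^1)$, i.e.\ the $A'$-component of $\eta$ to vanish. Take $Z$ a minimal smooth variety of general type with $\Alb(Z)$ simple of dimension $\geq 2$ and $q(Z)>0$, set $X=Z\times E$, and take $\eta=(\eta_1,\eta_2)$ on $\Alb(X)=\Alb(Z)\times E$ with both components nonzero. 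Then $a^*\eta$ is nowhere vanishing (its $E$-component already is, fibrewise), $X$ is minimal smooth with $\kappa(X)=\dim X-1$, yet $\eta$ is not pulled back from any elliptic curve quotient of $\Alb(X)$. So no refinement of Hacon--K\'ovacs or generic vanishing can deliver your reduction target; the most that is true is that the $E$-component of $\omega$ is nonzero.

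Once the target is corrected to ``the composite $p\colon X\to A\to E_0$ is smooth,'' the nowhere vanishing of $\omega$ no longer gives this for free, because $p^*\eta_{E_0}$ is only the $E$-component of $\omega$ and could a priori acquire zeros. This is where the paper's real work lies and where your proposal has no argument: one must prove that some finite \'etale cover of $X$ is \emph{isomorphic} to a product $Z\times E$ (Theorems \ref{Theorem:main1}--\ref{Theorem:main3}). The paper does this by passing to the Grassi--Wen birational model of the Iitaka fibration \cite{GW19}, comparing $\pi_1$ of that model with $\pi_1(X)$, performing an \'etale base change along a multisection to trivialize the elliptic fibre bundle up to codimension two, and then upgrading ``birational to a product'' to ``isomorphic to a product'' using that flops of $Z_{\min}\times E$ are again products (\cite{Ka08} plus uniqueness of flops); smoothness of $X\to E_0$ is then read off from the product structure. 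Your closing paragraph only gestures at generalizing \cite{Sch21} and \cite{HS21(1)}, so the decisive step is missing. A smaller issue: your descent of $\omega$ to a smooth model of the Iitaka base in the case $\omega|_F\equiv 0$ needs care, since the zero produced by \cite{PS14} on a resolution could lie over the exceptional locus; the paper instead produces a section of $mK_X-f^*L$ via Mori's results on Iitaka fibrations and applies Popa--Schnell directly on $X$.
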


Theorem \ref{Theorem:main A} is the special case of the following theorem stated for morphisms from minimal smooth projective varieties of Kodaira codimension one to abelian varieties.
\begin{theorem}[=Theorem \ref{Theorem:main3}]\label{Theorem:main B}
	Let $X$ be a minimal smooth  projective variety with Kodaira dimension $\kappa(X)=\dim X-1$, and $f\colon X\to A$ be a morphism to an abelian variety $A$. Then the following are equivalent
	
	(1) There exists a holomorphic 1-form $\omega\in H^0(A, \Omega_A^1)$ such that $f^*\omega$ has no zero on $X$.
	
	(2) $X$ admits a smooth morphism $\varphi\colon X\to E_0$, where $E_0$ is an elliptic curve, such that $\varphi$ fits into the commutative diagram
	$$\xymatrix{
		X  \ar[d]_-{\varphi} \ar[r]^f&A \ar[ld]^-q\\
		E_0
	}$$
	where $q$ is a surjective morphism. Moreover, there is a finite \'etale covering $\tau\colon X' \to X$ such that $X'\cong Z\times E$, where $Z$ is a smooth minimal model of general type, and  for any closen point $z\in Z$ the following composition $$\{z\}\times E\hookrightarrow Z\times E\overset{\tau}{\longrightarrow} X \overset{\varphi}{\longrightarrow} E_0$$ is an isogeny.
\end{theorem}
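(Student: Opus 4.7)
The direction $(2)\Rightarrow(1)$ is immediate: given a smooth $\varphi = q\circ f$ and a generator $\omega_0\in H^0(E_0,\Omega^1_{E_0})$, the form $\omega := q^*\omega_0\in H^0(A,\Omega^1_A)$ satisfies $f^*\omega = \varphi^*\omega_0$, which is nowhere vanishing because a smooth morphism onto a curve on which $\omega_0$ trivialises $\Omega^1_{E_0}$ has pointwise surjective differential. For the converse $(1)\Rightarrow(2)$, I first factor $f = g\circ\alb_X$ through the Albanese; then $\alb_X^*(g^*\omega) = f^*\omega$ is still nowhere vanishing, and I may replace $(A,f,\omega)$ by $(\Alb(X),\alb_X,g^*\omega)$.

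Next I would exploit the Iitaka fibration: by minimality of $X$ and $\kappa(X) = \dim X - 1$, after a smooth birational resolution if needed (and descending via the birational invariance of $H^0(-,\Omega^1)$), I realise it as a morphism $\pi\colon X\to Y$ onto a smooth projective $Y$ of general type of dimension $n-1$ whose generic fiber is an elliptic curve. Functoriality of the Albanese produces a surjection $p\colon\Alb(X)\to\Alb(Y)$; let $K\subset\Alb(X)$ be the identity component of $\ker p$. The Popa--Schnell theorem applied to the general type variety $Y$ says every $\alpha\in H^0(Y,\Omega^1)$ has a zero, so every element of $p^*H^0(\Alb(Y),\Omega^1)$ pulls back to $X$ as a form of the shape $\pi^*\alpha$ with zeros; combined with the nowhere-vanishing hypothesis this forces $\omega\notin p^*H^0(\Alb(Y),\Omega^1)$, equivalently $\omega|_K \ne 0$.

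The core of the proof is to promote $\omega|_K\ne 0$ to an elliptic quotient $q\colon\Alb(X)\to E_0$ making $\varphi := q\circ\alb_X$ smooth. Each general fiber $F$ of $\pi$ is an elliptic curve, and $\alb_X(F)$, after translation, is a $1$-dimensional subtorus $E_F\subset K$; countability of the set of abelian subvarieties of $K$ forces $E_F = E$ for a very general $F$ and a fixed elliptic subtorus $E\subset K$, and $\omega|_E\ne 0$ by restriction (since $f^*\omega|_F$ is a nonzero form on the elliptic curve $F$ pulled back via the isogeny $F\to E$). Using $\omega|_E\ne 0$ together with Poincar\'e reducibility I would produce a codimension-one abelian subvariety $A'\subset\Alb(X)$ transverse to $E$ such that the elliptic quotient $q\colon\Alb(X)\to E_0 := \Alb(X)/A'$ carries a $1$-form $\omega_0\in H^0(E_0,\Omega^1_{E_0})$ whose pullback to $X$ is nowhere vanishing; for a morphism to a smooth curve this is equivalent to $\varphi$ being smooth. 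I expect the main obstacle of the proof to live in this construction of $A'$: the generic information $E_F = E$ must be promoted to a uniform condition $E_F\not\subset A'$ for \emph{every} fiber $F$, including special and multiple ones, and this rigidity must be forced by the global non-vanishing of $f^*\omega$ together with the algebraic structure of the abelian subvarieties of $K$.

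Finally, once $\varphi\colon X\to E_0$ is constructed and smooth, the \'etale splitting follows from standard isotriviality. A general fiber $X_b$ of $\varphi$ has $K_{X_b} = K_X|_{X_b}$ nef, hence is minimal, and Kawamata's subadditivity of Kodaira dimension over an abelian base gives $\kappa(X_b) = \kappa(X) - \kappa(E_0) = n-1 = \dim X_b$, so $X_b$ is of general type. A smooth projective family of canonically polarised manifolds over an elliptic curve is isotrivial after a finite \'etale base change, so pulling $\varphi$ back along a suitable isogeny $E\to E_0$ produces $\tau\colon X' := X\times_{E_0}E\to X$ \'etale with $X'\cong Z\times E$, where $Z\cong X_b$ is a smooth minimal model of general type; by construction the composition $\{z\}\times E\hookrightarrow X'\to X\to E_0$ for any closed $z\in Z$ is the chosen isogeny $E\to E_0$, closing the structure.
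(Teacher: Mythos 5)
The decisive step of your argument --- producing the elliptic quotient $q\colon \Alb(X)\to E_0$ so that $\varphi=q\circ\alb_X$ is smooth everywhere --- is exactly the point you flag as ``the main obstacle'' and leave unproved, and it is the heart of the theorem: knowing that very general Iitaka fibres map to translates of a fixed elliptic curve $E$ says nothing a priori about the differential of $\varphi$ along special or multiple fibres, nor over the locus where the Iitaka fibration fails to be equidimensional. The paper resolves this by reversing your order of deductions: it first proves the finite \'etale splitting $X'\cong Z\times E$ (Theorems \ref{Theorem:main1} and \ref{Theorem:main2}, using Grassi--Wen's birational modification of the Iitaka elliptic fibration, a $\pi_1$-comparison between $X^{\text{sm}}$ and an open part of the modified model, the fact that all singular fibres of an isotrivial minimal elliptic fibration over codimension-one points are multiples of smooth elliptic curves, a fine-moduli argument with level structure, the Greb--Kebekus--Peternell extension of \'etale covers, and a flop analysis showing that any minimal model birational to $S'_{\text{min}}\times E$ is again a product), and only afterwards defines $\varphi$ via the second projection of $Z\times E$; smoothness of $\varphi$ is then immediate because $\varphi\circ\tau$ is a projection followed by an isogeny and $\tau$ is \'etale. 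Without an independent construction of a smooth $\varphi$, your subsequent isotriviality argument has nothing to start from, so the proposal as written does not prove the theorem.

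There are also two local errors worth fixing. First, the base $Y$ of the Iitaka fibration of a variety of Kodaira codimension one need not be of general type (nor smooth), so you cannot apply Popa--Schnell to $Y$; the paper instead applies Popa--Schnell directly to $X$, using Mori's result that $H^0(X^{\text{min}},\mathcal{O}_{X^{\text{min}}}(kK_{X^{\text{min}}}-(\mu\circ\phi)^*H))\neq0$ for $H$ ample on the Stein factorization of $f^{\text{min}}$, to rule out the possibility that $f$ contracts the Iitaka fibres. Second, subadditivity $\kappa(X)\geq\kappa(X_b)+\kappa(E_0)$ gives only the upper bound $\kappa(X_b)\leq n-1$, not the lower bound you need for $X_b$ to be of general type (that instead follows from $mK_X=\phi^*H$ with $H$ ample and $\phi|_{X_b}$ generically finite onto $S$); and even once $X_b$ is known to be of general type it is merely minimal, not canonically polarized, so the isotriviality input must be the Viehweg--Zuo-type hyperbolicity for families of minimal models of general type rather than the canonically polarized case.
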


For the general (not necessarily minimal) case, we have a structure theorem for smooth projective variety of Kodaira codimension one with a nowhere vanishing holomorphic 1-form.

\begin{theorem}[=Theorem \ref{Theorem:main2}]\label{Theorem:mainC}
	Let $X$ be a smooth projective variety of Kodaira dimension $\kappa(X)=\dim X-1$. If $X$ has a nowhere vanishing holomorphic 1-form, then for any minimal model $X^{\text{min}}$ of $X$ there is a finite quasi-\'etale covering $X'\to X^{\text{min}}$ such that any $\Q$-factorialization $X''$ of $X'$ is a product $Z\times E$, where $Z$ is a minimal model of general type and $E$ is an elliptic curve.
\end{theorem}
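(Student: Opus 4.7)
The plan is to bootstrap from the smooth minimal case of Theorem~\ref{Theorem:main B} by first constructing an elliptic fibration $\varphi^{\text{min}} : X^{\text{min}} \to E_0$, next proving that it is isotrivial, then trivialising its monodromy by passing to a finite quasi-\'etale cover, and finally obtaining a genuine product after $\mathbb{Q}$-factorialisation. The fibration $\varphi^{\text{min}}$ is produced as follows. The nowhere vanishing $\omega \in H^0(X, \Omega_X^1)$ is pulled back from a one-form $\eta \in H^0(\Alb(X), \Omega^1_{\Alb(X)})$ via the Albanese map $a_X : X \to \Alb(X)$. The hypothesis $\kappa(X) = \dim X - 1$, together with the Popa--Schnell vanishing theorem and the generic vanishing techniques cited in the introduction, yields a quotient $\Alb(X) \to E_0$ onto an elliptic curve such that $\eta$ is the pullback of a nowhere vanishing one-form on $E_0$. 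The composition $\varphi: X \to E_0$ is surjective, and because $E_0$ is a smooth curve and $X^{\text{min}}$ is normal, $\varphi$ descends along the birational equivalence $X \dashrightarrow X^{\text{min}}$ to a morphism $\varphi^{\text{min}} : X^{\text{min}} \to E_0$.

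The next step is to analyse the structure of $\varphi^{\text{min}}$. Additivity of Kodaira dimensions over a curve base gives $\kappa(F) = n-1 = \dim F$ for a general fiber $F$, so $F$ is of general type with a smooth minimal model $Z$ that is minimal of general type. The heart of the argument is then to prove that $\varphi^{\text{min}}$ is isotrivial, i.e.\ all smooth fibers are isomorphic to $Z$. The key input is the equality $\omega = (\varphi^{\text{min}})^* \eta'$ with $\eta'$ a nowhere vanishing one-form on $E_0$; combined with Viehweg--Zuo type negativity/positivity for the direct images $\varphi^{\text{min}}_* \omega_{X^{\text{min}}/E_0}^{\otimes m}$ over the elliptic base (which has trivial canonical bundle), this forces the variation of Hodge structure to vanish and hence the family to be isotrivial.

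Once isotriviality is established, the monodromy representation $\rho : \pi_1(E_0^{\circ}) \to \Aut(Z)$ of the smooth part of the fibration has finite image $G$. The finite \'etale cover of the smooth locus of $X^{\text{min}}$ cut out by $\ker \rho$ extends, via Zariski--Nagata purity on the normal variety $X^{\text{min}}$, to a finite quasi-\'etale cover $X' \to X^{\text{min}}$; the base $E_0$ is replaced by a compatible isogeny $E \to E_0$. After this cover the induced fibration $X' \to E$ has trivial monodromy, so $X'$ is birational to $Z \times E$. Take any $\mathbb{Q}$-factorialisation $X'' \to X'$, which exists as a small modification of a klt variety. Since $Z \times E$ is $\mathbb{Q}$-factorial terminal whenever $Z$ is (and one may arrange this by first $\mathbb{Q}$-factorialising $Z$), the small modification must coincide with this product, because on the smooth locus of $X'$ the isotrivial product structure is already realised and no nontrivial flops are available.

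The main obstacle is the isotriviality step of the second paragraph, which must be carried out in the singular setting of $X^{\text{min}}$. The positivity/vanishing arguments used in the smooth minimal case of Theorem~\ref{Theorem:main B} need to be reformulated in terms of reflexive differentials and suitable direct image sheaves on $X^{\text{min}}$, and one must control the behaviour of $\varphi^{\text{min}}$ at its singular fibers, which need not form a normal crossing divisor. A secondary subtlety is verifying that no nontrivial flop can separate a $\mathbb{Q}$-factorialisation of $X'$ from the product $Z \times E$; this should follow by reducing to the smooth minimal situation of Theorem~\ref{Theorem:main B} on a resolution of $X'$ that is itself minimal thanks to the isotrivial product structure.
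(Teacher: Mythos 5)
Your proposal inverts the paper's logic and in doing so opens two gaps that I do not see how to close. The first is at the very start: you assert that Popa--Schnell plus generic vanishing ``yields a quotient $\Alb(X)\to E_0$ onto an elliptic curve such that $\eta$ is the pullback of a nowhere vanishing one-form on $E_0$.'' No argument is given, and the statement as written is false: on $Z\times E$ with $Z$ irregular of general type, the form $p_1^*\omega_1+p_2^*\omega_2$ (with $\omega_2\neq 0$) is nowhere vanishing but is not pulled back from $E$. The existence of the elliptic quotient $E_0$ is essentially the content of Corollary \ref{Corollary:mainA} and is a \emph{consequence} of the structure theorem, not an input; the paper extracts $E_0$ by a genuinely different mechanism, namely the Iitaka fibration $\phi\colon X^{\text{min}}\to S$, whose fibres are elliptic curves. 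Lemma \ref{prop:reduce to isotrivial} uses Theorem \ref{thm:popa-schnell} to show these elliptic fibres are \emph{not} contracted by the map to the abelian variety, hence map to translates of a fixed elliptic curve $E_0\subset A$, and dualizing produces the surjection onto $E_0$. Your proposal never engages with the Iitaka fibration, which is the central object of the paper's proof (via the Grassi--Wen modification, Theorem \ref{Thm:GW-ellipt-modif}), and without it you have no source for $E_0$. Relatedly, your claim $\kappa(F)=n-1$ for the general fibre of $\varphi^{\text{min}}$ does not follow from additivity alone (the easy addition and $C_{n,1}$ inequalities only give $\kappa(F)\in\{n-2,n-1\}$); in the paper this is again a by-product of the elliptic Iitaka fibration mapping finitely onto $E_0$.

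The second gap is the monodromy/extension step. Your cover trivializing $\rho\colon\pi_1(E_0^{\circ})\to\Aut(Z)$ is only defined over the complement of the bad points of $E_0$, whose preimages in $X^{\text{min}}$ are \emph{divisors}; Zariski--Nagata purity extends covers over closed subsets of codimension at least two and is simply inapplicable here. To extend to an isogeny $E\to E_0$ you would need to know there are no bad fibres, i.e.\ essentially the smoothness of $\varphi$ --- which is the conclusion of Theorem \ref{Theorem:main B} for smooth minimal $X$ and is not available for singular $X^{\text{min}}$; the argument becomes circular. The paper avoids this entirely because its fibration $\psi\colon Y\to T$ has a base of dimension $n-1$: all singular fibres over codimension-one points of $T$ are multiples of smooth elliptic curves, \cite[Lemma 5.11]{HS21(1)} makes the base change \'etale after deleting a codimension-two subset of $T$, and \cite[Theorem 3.8]{GKP16} then extends the resulting \'etale cover of $X^{\text{sm}}$ to a quasi-\'etale cover of $X^{\text{min}}$ --- a purity-type extension that is legitimate precisely because the deleted locus has codimension $\geq 2$ in the total space. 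Finally, your treatment of the $\Q$-factorialization (``no nontrivial flops are available'') misses the actual point of the paper's last step: flops of $S'_{\text{min}}\times E$ certainly can exist, but every flopping curve is rational, hence contracted by the projection to $E$, so the $E$-action sweeps it into a trivial family and each flop is again a product; combined with Kawamata's theorem that birational minimal models are connected by flops, this is what forces $X''\cong Z\times E$.
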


For a smooth projective variety of Kodaira dimension $\kappa(X)=\dim X-1$, $X$ admits a minimal model $X^{\text{min}}$ by \cite{BCHM10} and \cite{Lai11}. We briefly recall ``quasi-\'etale covering'' and ``$\mathbb{Q}$-factorialization'' in Section \ref{Section:2}. The proof of Theorem \ref{Theorem:mainC} is based on Popa and Schnell\cite[Theorem 2.1]{PS14}, Grassi and Wen\cite[Theorem 40]{GW19} on the birational modification of elliptic fibrations and constructing tricks in \cite[Section 5]{HS21(1)}. Theorem \ref{Theorem:main B} is derived from Theorem \ref{Theorem:mainC} via the specialty of flops of minimal varieties of type $Z\times E$, where $E$ is an elliptic curve. A direct consequence of Theorem \ref{Theorem:mainC} is

 \begin{corollary}\label{Corollary:mainA}
	Let $X$ be a smooth projective variety of Kodaira dimension $\kappa(X)=\dim X-1$. Assume that the Albanese $A_X$ of $X$ does not admit a 1-dimensional simple factor, then every holomorphic 1-form of $X$ has zero.
\end{corollary}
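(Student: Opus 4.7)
I argue by contradiction. Assume $X$ admits a nowhere vanishing holomorphic $1$-form $\omega$, and I will produce a one-dimensional simple factor of $\Alb(X)$.

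Applying Theorem \ref{Theorem:mainC}, fix a minimal model $X^{\min}$, a finite quasi-\'etale cover $\pi\colon X'\to X^{\min}$, and a $\mathbb{Q}$-factorialization $X''$ with $X''\cong Z\times E$, where $Z$ is minimal of general type and $E$ is an elliptic curve. Passing to the Galois closure of $\pi$, I may assume $\pi$ is Galois with group $G$. Canonicity of the Iitaka fibration $X''\to Z$ forces $G$ to preserve the product structure, yielding homomorphisms $G\to\Aut(Z)$ and $G\to\Aut(E)$ with $g\cdot(z,e)=(g_Z\cdot z,\,g_E\cdot e)$.

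The core of the argument is a dichotomy on the action $G\to\Aut(E)$. If $G$ acts on $E$ by translations, then $E_0:=E/G$ is an elliptic curve and the $G$-invariant projection $X''\to E$ descends to a morphism $X^{\min}\to E_0$; the universal property of the Albanese then yields a surjection $\Alb(X)=\Alb(X^{\min})\twoheadrightarrow E_0$, whence by Poincar\'e reducibility $E_0$ is a $1$-dimensional simple factor of $\Alb(X)$, contradicting the hypothesis. Otherwise, some $g\in G$ acts on $E$ as a non-translation automorphism, so $G$ acts non-trivially on the $1$-dimensional space $H^0(E,\Omega^1)$, giving $H^0(E,\Omega^1)^G=0$. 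The $G$-invariant pullback $\omega''\in H^0(X'',\Omega^1)^G$ of $\omega$ therefore lies entirely in $H^0(Z,\Omega^1)^G$: writing $p_Z\colon X''\to Z$ for the projection, $\omega''=p_Z^*\omega_Z$ for some nonzero $G$-invariant $\omega_Z\in H^0(Z,\Omega^1)$. Popa--Schnell \cite[Theorem 2.1]{PS14} applied to the smooth projective general type variety $Z$ produces $z_0\in Z(\omega_Z)$, so $\omega''$ vanishes identically along the elliptic fibre $\{z_0\}\times E\subset X''$.

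To conclude I must transfer this zero of $\omega''$ to a zero of $\omega$ on $X$, equivalently, find a point of $\{z_0\}\times E$ in the \'etale locus of $\pi$. The non-\'etale locus decomposes as $\bigcup_{g\neq 1}\Fix(g_Z)\times\Fix(g_E)$: for $g_E$ a non-trivial translation this piece is empty, and for $g_E$ a non-translation automorphism the set $\Fix(g_E)\subset E$ is finite, so these contributions meet each $E$-fibre in only finitely many points. The main obstacle is the subgroup $H:=\ker(G\to\Aut(E))$, whose non-identity elements contribute $\Fix(g_Z)\times E$ to the non-\'etale locus and thus contain full $E$-fibres. I handle this by descending $\omega_Z$ to the general type quotient $Z/H$ and applying Popa--Schnell there: choosing $z_0\in Z(\omega_Z)$ to lie outside $\bigcup_{g\in H\setminus\{1\}}\Fix(g_Z)$ places $\{z_0\}\times E$ outside the non-\'etale locus except for a finite set, providing some $(z_0,e_0)$ in the \'etale locus whose image in $X$ is then a zero of $\omega$, the desired contradiction.
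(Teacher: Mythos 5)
Your route is genuinely different from the paper's, and it contains a real gap. The paper proves this corollary in one line: it applies Theorem \ref{Theorem:main1} to the Albanese morphism $\alpha_X\colon X\to A_X$ (every holomorphic $1$-form on $X$ is $\alpha_X^*\omega'$ for some $\omega'$ on $A_X$), and part 2) of that theorem already supplies a surjection $q\colon A_X\to E_0$ onto an elliptic curve; Poincar\'e reducibility then gives a $1$-dimensional simple factor of $A_X$, contradicting the hypothesis. By starting instead from Theorem \ref{Theorem:mainC}, which has discarded the relation to the Albanese, you are forced to reconstruct that surjection by hand, and this is where the trouble starts.

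The step that fails is your Case B. There you apply Popa--Schnell on $Z$ to get $z_0\in Z(\omega_Z)$ and then try to transfer the resulting zero of the pulled-back form to a zero of $\omega$ on $X$ by locating a point of $\{z_0\}\times E$ in the \'etale locus of $\pi$. But $X$ is not assumed minimal: between $X''=Z\times E$ and $X$ sit the small morphism $X''\to X'$ and, crucially, the birational map $X\dashrightarrow X^{\text{min}}$, which fails to be an isomorphism over a codimension-$\geq 2$ subset of $X^{\text{min}}$ that can perfectly well contain the entire curve $\pi(\{z_0\}\times E)$. Over such a locus there is no point of $X$ at which to evaluate $\omega$, and pullbacks of nowhere vanishing forms do acquire zeros on exceptional loci, so no contradiction results. (Moreover $Z$ is only a minimal model and may be singular, so Popa--Schnell must be applied on a resolution, whose exceptional locus you would also have to avoid.) The paper's Lemma \ref{prop:reduce to isotrivial} sidesteps all of this by applying Popa--Schnell \emph{on $X$ itself}, via $H^0(X,\mathcal{O}_X(mK_X-f^*L))\neq 0$, which is precisely the situation of your Case B, where the form comes from the base of the Iitaka fibration. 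Two further steps are unjustified: Theorem \ref{Theorem:mainC} produces one specific quasi-\'etale cover whose $\mathbb{Q}$-factorialization is a product, and the Galois closure of that cover need not retain the product structure; and even for a Galois product cover, the group need only preserve the fibration over $Z$, not the second projection, so a priori $g\cdot(z,e)=(g_Z z,\sigma_g(z)(e))$ with $\sigma_g\colon Z\to\Aut(E)$ non-constant, since a variety of general type can dominate an elliptic curve.
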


In the remaining of the introduction, we consider the following conjecture which is mentioned implicitly in \cite{DHL21} and supported by main results in \cite{DHL21} and \cite{SY22}.

\begin{conjecture}
Let $f\colon X\to A$ be a morphism from a smooth projective variety $X$ to a simple abelian variety $A$. $f$ is smooth if and only if there is a holomorphic 1-form $\omega\in H^0(A, \Omega_A^1)$ such that $f^*\omega$ has no zero. 
\end{conjecture}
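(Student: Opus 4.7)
The forward direction is straightforward: if $f$ is smooth, then at every $x \in X$ the differential $df_x\colon T_xX\to T_{f(x)}A$ is surjective, so $(df_x)^*\colon T^*_{f(x)}A\to T^*_xX$ is injective. Since $\Omega_A^1$ is globally trivialized, any nonzero $\omega\in H^0(A,\Omega_A^1)$ restricts to a nonzero covector at every point of $A$, and hence $f^*\omega$ is nowhere vanishing on $X$.

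The content of the conjecture lies in the reverse implication, which I would approach by contradiction. Suppose $f^*\omega$ has no zero on $X$ but $f$ is not smooth. The first step is to reduce to the case where $f$ is surjective. If $Y := f(X)$ is a proper subvariety of the simple abelian variety $A$, then Ueno's theorem (combined with simplicity, which forbids proper nonzero abelian subvarieties) implies that $Y$ is of general type. A desingularization $\widetilde{Y} \to Y$ is then a smooth projective variety of general type equipped with a morphism to $A$, so by Popa--Schnell \cite{PS14} the pullback of $\omega$ to $\widetilde{Y}$ vanishes somewhere. A careful local analysis using a resolution $\widetilde{X}$ of $X \times_Y \widetilde{Y}$ should transport this zero back to $X$, yielding the desired contradiction and forcing $f(X) = A$.

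With $f$ now surjective, let $F$ denote a general fiber. By Viehweg's additivity theorem $C_{n,a}$ for fibrations over abelian varieties one has $\kappa(X) \geq \kappa(F)$. If $\kappa(F) > 0$, one should be able to apply Popa--Schnell fiberwise (via a relative canonical model or via the Iitaka fibration of $X$) to derive a zero of $f^*\omega$, a contradiction; hence $\kappa(F) = 0$, so that $\kappa(X) = \dim X - \dim A$. This places $X$ in a setting analogous to Theorem \ref{Theorem:mainC}, and the plan is to establish a higher-dimensional analogue of Theorem \ref{Theorem:main B}: after a finite \'etale cover (and a $\Q$-factorialization of the minimal model), $X$ becomes a product $Z \times A$ with $Z$ a minimal model of general type and $f$ the projection followed by an isogeny. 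From such a product structure, smoothness of $f$ is immediate.

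The principal obstacle is precisely this extension of Theorem \ref{Theorem:main B} from elliptic curve bases to higher-dimensional simple abelian varieties. The proof of Theorem \ref{Theorem:main B} leans on \cite{GW19} on birational modifications of elliptic fibrations and a flop-by-flop analysis that exploits the fact that fibers over a curve are divisors in $X$; for $\dim A \geq 2$ one must run the MMP relatively over a higher-dimensional base, and the splitting-up-to-\'etale-cover argument must produce the full factor $A$ rather than an isogenous elliptic curve. A secondary difficulty is ruling out intermediate Kodaira dimensions of the general fiber $F$: neither Popa--Schnell nor the structure theorems of this paper immediately apply, and one likely needs a relative generic-vanishing argument in the spirit of \cite{DHL21} and \cite{SY22}.
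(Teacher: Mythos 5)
The statement you are addressing is stated in the paper as a \emph{conjecture}, and the paper does not prove it in general: it only establishes the special case $\kappa(X)=\dim X-1$ (Theorem \ref{Theorem:mainD}). There, the structure theorem (Theorem \ref{Theorem:main1}) produces a surjection $q\colon A\to E_0$ onto an elliptic curve, simplicity forces $A$ itself to be an elliptic curve, and smoothness of $f$ is then immediate because a nowhere vanishing $f^*\omega$ over a one-dimensional base means $df$ is everywhere surjective. So there is no complete proof in the paper to compare against, and your proposal is not one either: the step you identify as ``the principal obstacle'' --- extending the product decomposition $Z\times E$ of Theorem \ref{Theorem:main3} to a decomposition $Z\times A$ with $\dim A\geq 2$ --- is precisely the open content of the conjecture, and none of the paper's tools carry over (\cite{GW19} is specific to elliptic fibrations, i.e.\ one-dimensional Iitaka fibres, and the flop analysis in Theorem \ref{Theorem:main2} uses that every flopping curve projects to a point of the one-dimensional abelian factor).

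Beyond the admitted gap, two of your reduction steps are flawed. First, the claim that $\kappa(F)>0$ for a general fibre $F$ of $f$ would yield a zero of $f^*\omega$ ``fiberwise'' is vacuous: $f^*\omega$ restricts to the \emph{zero} form on every fibre of $f$, so no fibrewise application of \cite{PS14} can produce a zero of $f^*\omega$ on $X$. Indeed the conclusion $\kappa(F)=0$ contradicts the expected (and, for $\dim A=1$, proven) picture: in Theorem \ref{Theorem:main3} one has $X'\cong Z\times E$ with the map to $E_0$ restricting to an isogeny on $\{z\}\times E$, so the fibres of $f$ are finite \'etale quotients of $Z$, which is of \emph{general type}. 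Consequently the numerology $\kappa(X)=\dim X-\dim A$ does not follow from $\kappa(F)=0$ (subadditivity gives only $0\le\kappa(X)\le\dim A$ in that case), and you cannot place $X$ into the setting of Theorem \ref{Theorem:mainC} unless $\dim A=1$, which is part of what must be proved. Second, in the reduction to surjective $f$, the zero of the pullback of $\omega$ to a desingularization $\widetilde Y$ of $Y=f(X)$ guaranteed by \cite{PS14} may lie on the exceptional locus over $\Sing Y$, where the image of $df_x$ for $x\in X$ need not be controlled by $T\widetilde Y$; transporting that zero back to $X$ is a real issue, not a routine local analysis.
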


The above conjecture is vacuously true for varieties of general type due to \cite{PS14}. We prove this conjecture for smooth projective varieties of Kodaira codimension one.

\begin{theorem}\label{Theorem:mainD}
Let $X$ be a smooth projective variety of Kodaira dimension $\kappa(X)=\dim X-1$, and $f\colon X\to A$ be a morphism to a simple abelian variety $A$. Then 

1) $f$ is smooth if and only if there is a holomorphic 1-form $\omega\in H^0(A, \Omega^1_A)$ such that $f^*\omega$ has no zero, and

2) $A$ is an elliptic curve when the conditions in 1) hold.
\end{theorem}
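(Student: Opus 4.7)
The plan is to handle both parts simultaneously by first deducing, from the nowhere vanishing hypothesis, that $A$ must be an elliptic curve; once this is established, the equivalence in (1) is standard. The easy direction of (1) (``$f$ smooth $\Rightarrow$ some $f^*\omega$ nowhere vanishing'') is immediate: if $f$ is smooth then $df\colon T_X\to f^*T_A$ is fiberwise surjective, so $f^*\Omega_A^1\hookrightarrow\Omega_X^1$ is fiberwise injective, and any nonzero $\omega\in H^0(A,\Omega_A^1)$---which is automatically nowhere vanishing on the abelian variety $A$---pulls back to a nowhere vanishing form on $X$.

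For the hard direction together with part (2), suppose $\omega\in H^0(A,\Omega_A^1)$ satisfies $f^*\omega$ nowhere vanishing (so $f$ is surjective by simplicity of $A$). I apply Theorem \ref{Theorem:mainC}: fixing a minimal model $X^{\min}$ of $X$, there exist a finite quasi-\'etale cover $X'\to X^{\min}$ and a $\bbQ$-factorialization $X''\to X'$ with $X''\cong Z\times E$, where $Z$ is a minimal model of general type and $E$ is an elliptic curve. Let $\hat Z\to Z$ be a resolution of singularities, and let $W$ be a smooth projective variety equipped with a generically finite morphism $\pi\colon W\to X$ (obtained by resolving the normalization of $X$ in the function field $k(X'')$) and, after further blow-ups to resolve indeterminacy, a birational morphism $q\colon W\to\hat Z\times E$. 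By birational invariance of holomorphic $1$-forms on smooth projective varieties and the K\"unneth formula,
\[
H^0(W,\Omega^1_W)\;=\;H^0(\hat Z,\Omega^1_{\hat Z})\oplus H^0(E,\Omega^1_E),
\]
so we may write $\pi^*f^*\omega=\alpha+\beta$ in this decomposition.

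The crucial claim is that $\beta\neq 0$. Since $\pi^*f^*\omega$ is nowhere vanishing on the (nonempty) \'etale locus of $\pi$, we have $(\alpha,\beta)\neq(0,0)$. Assume for contradiction that $\beta=0$; then $\alpha\neq 0$, and by Popa--Schnell \cite[Theorem 2.1]{PS14} applied to the smooth projective general-type variety $\hat Z$, the form $\alpha$ vanishes at some $\hat z_0\in\hat Z$. The preimage $q^{-1}(\{\hat z_0\}\times E)\subset W$ is a positive-dimensional subvariety on which $\pi^*f^*\omega=q^*\alpha$ vanishes identically; moreover, for a sufficiently generic choice of $\hat z_0\in V(\alpha)$ (in particular one lying in the preimage of the smooth locus of $Z$) this subvariety meets the intersection of the isomorphism locus of $q$ and the \'etale locus of $\pi$---both open dense in $W$. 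At any point of this intersection, $\pi$ is \'etale and $f^*\omega$ is nonzero at its image, contradicting the vanishing of $\pi^*f^*\omega$ there. Hence $\beta\neq 0$.

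Consequently the induced homomorphism $h\colon\Alb(W)=\Alb(\hat Z)\times E\to A$ (up to translation) satisfies $(h|_{\{0\}\times E})^*\omega=\beta\neq 0$, giving a nonzero morphism from the elliptic curve $E$ to the simple abelian variety $A$; by simplicity of $A$ this morphism is surjective, forcing $\dim A=1$. So $A$ is an elliptic curve, proving (2). Finally, with $A$ an elliptic curve, $\Omega_A^1$ is a trivial line bundle of rank one, and the nowhere vanishing of $f^*\omega$ is equivalent to the fiberwise injectivity of $f^*\Omega_A^1\to\Omega_X^1$, hence to the fiberwise surjectivity of $df\colon T_X\to f^*T_A$, i.e., to smoothness of the morphism $f\colon X\to A$; this completes (1). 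The main technical subtlety is ensuring that the positive-dimensional subvariety $q^{-1}(\{\hat z_0\}\times E)$ genuinely meets both the isomorphism locus of $q$ and the \'etale locus of $\pi$, which is handled through the generic choice of $\hat z_0$ and the construction of $W$ from the covers supplied by Theorem \ref{Theorem:mainC}.
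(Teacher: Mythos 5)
Your overall strategy is close to the paper's, but the step on which everything hinges---the claim that $\beta\neq 0$---has a genuine gap. If $\beta=0$, you know that $\pi^*f^*\omega=q^*p_1^*\alpha$ vanishes along all of $q^{-1}(Z(\alpha)\times E)$, and separately that the zero locus of $\pi^*f^*\omega$ is contained in the degeneracy locus $D$ of $d\pi$ (since $f^*\omega$ is nowhere zero on $X$, a point $w$ with $(\pi^*f^*\omega)(w)=0$ forces $\operatorname{rank} d\pi_w<\dim X$). So the assumption $\beta=0$ only tells you $Z(\alpha)\times E\subset q(D)$, and to get a contradiction you must rule this containment out. Your ``sufficiently generic choice of $\hat z_0\in Z(\alpha)$'' does not do this: Popa--Schnell guarantees only that $Z(\alpha)$ is nonempty, and it may well be a single point or a small closed subset lying entirely inside $q(D)$. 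The set $D$ is not small: $\pi\colon W\to X$ is built from a resolution of the normalization of $X$ in $k(X'')$, and although $X'\to X^{\min}$ is quasi-\'etale, the birational maps $X\dashrightarrow X^{\min}$ and $X''\to X'$ can produce genuine ramification and exceptional divisors in $W$ whose images in $\hat Z\times E$ are divisors of the form $B_1\times E$; nothing you have said excludes $Z(\alpha)\subset B_1$. So the contradiction is not established.

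The paper sidesteps exactly this difficulty by never applying Popa--Schnell to the auxiliary general-type variety $\hat Z$ and then trying to transport a zero back across the generically finite correspondence $W$. Instead, the proof of Theorem \ref{Theorem:mainD} invokes Theorem \ref{Theorem:main1}, whose conclusion already contains the compatibility you are missing: a surjection $q\colon A\to E_0$ onto an elliptic curve fitting into a commutative diagram with $f^{\min}$ and the second projection of $X'\dashrightarrow S'\times E$. Simplicity of $A$ plus the surjection $A\to E_0$ immediately gives $\dim A=1$, and then part 1) follows as in your last step. The input that makes this work is Lemma \ref{prop:reduce to isotrivial}: if the Iitaka fibres were contracted by $f^{\min}$ (equivalently, if your $E$-factor mapped to a point of $A$, i.e.\ $\beta=0$), then $f^{\min}$ would factor through the Iitaka fibration, producing a section of $\mathcal{O}_X(kk'K_X-f^*k'L)$ on $X$ itself, and Popa--Schnell applied \emph{directly to $X$} (where the nowhere-vanishing form actually lives) gives the contradiction. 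To repair your write-up, either cite Theorem \ref{Theorem:main B} (= Theorem \ref{Theorem:main3}) or Theorem \ref{Theorem:main1} rather than Theorem \ref{Theorem:mainC}, or replace your $\beta\neq0$ argument by the observation that $\beta=0$ forces $f^{\min}$ to contract the general Iitaka fibres, contradicting Lemma \ref{prop:reduce to isotrivial}(1). The easy direction of (1) and the final reduction ``$\dim A=1$ plus $f^*\omega$ nowhere vanishing $\Rightarrow$ $f$ a submersion'' are fine as you wrote them.
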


\noindent\textbf{Acknowledgements} Most of the work was carried out when the author visited Leibniz University Hannover. He would like to thank Stefan Schreieder for very helpful discussions and Leibniz University Hannover for  hospitality during the visit. The author also thanks Yajnaseni Dutta for her useful comments. This work is supported by the Research Foundation Flanders (FWO) Grant no. 1280421N “Topology, birational geometry and vanishing theorem for complex algebraic varieties”.

\section{Preliminaries and Technical Tools}\label{Section:2}

\subsection{Iitaka fibration}\label{sect:Iitaka-fib} We recall the following basic concepts on Iitaka and Kodaira dimension (see e.g., \cite[Section 2.1 C]{Laz04}). Let $X$ be a normal projective variety. Consider a line bundle $L$ on $X$ such that $H^0(X, L^m)\neq 0$ for some integer $m\in\mathbb{N}$. Then one has a rational mapping $$\phi_{|L^{\otimes m}|}\colon X\dashedlongrightarrow \mathbb{P}H^0(X, L^{\otimes m}).$$ The \emph{Iitaka dimension} of $L$ is defined to be $$\kappa(X ,L)\coloneqq \text{max}\{\dim\phi_{|L^{\otimes m}|}\ |\ m\in \mathbb{N}\ \text{such that}\ H^0(X,  L^{\otimes m})\neq 0\}.$$ By convention $\kappa(X ,L)=-\infty$ if $H^0(X, L^m)=0$ for all $m$. By \cite[Theorem 2.1.33]{Laz04}, for the fixed line bundle $L$ with $\kappa(X ,L)\geq 0$ and sufficiently divisable integers $m$, the rational maps $\phi_{|L^{\otimes m}|}$ are birationally equivalent to a fixed algebraic fibre space. We call any rational map in this birational class an \emph{Iitaka fibration} of $X$ associated to $L$. When the line bundle $L$ is the canonical line bundle $\mathcal{O}_X(K_X)$, we call $\kappa(X)\coloneqq \kappa(X, \mathcal{O}_X(K_X))$ the \emph{Kodaira dimension}
of $X$, and $\phi_{|L^{\otimes m}|}$ an \emph{Iitaka fibration} of $X$ for $m$ sufficiently divisable.

\subsection{Quasi-\'etale morphism} A finite morphism $f:X'\to X$ between normal varieties is called quasi-\'etale if it is \'etale in codimension one, see e.g.\ \cite{GKP16}; if $X$ is smooth, then any quasi-\'etale morphism $f\colon X'\to X$ is \'etale.
In particular, quasi-\'etale morphism $f$ is ramified at most at singular points of $X$.
\subsection{$\mathbb{Q}$-factorialization} Let $X$ be a terminal minimal variety with canonical divisor $K_X$ a $\Q$-cartier divisor. By \cite[Corollary 1.4.3]{BCHM10}, there is a $\Q$-factorialization (not necessarily unique) $\sigma: X'\to X$, i.e.\ a proper birational morphism which is an isomorphism in codimension one such that $X'$ is $\Q$-factorial, terminal and $K_{X'}$ is nef. 

\subsection{Popa-Schnell's result on  holomorphic 1-forms} We recall the following celebrated theorem by Popa and Schnell \cite[Theorem 2.1]{PS14}, which helps us to reduce our arguments for main theorems to cases of generically isotrivial Iitaka fibrations. 

\begin{theorem}[Popa-Schnell]\label{thm:popa-schnell}
	Let $X$ be a smooth projective variety, and $f\colon X\to A$
	be a morphism to an abelian variety. If $H^0(X, \mathcal{O}_X(mK_X- f^*L))\neq 0$ for some integer $m\geq 1$
	and some ample divisor $L$ on $A$, then $Z(\omega)$ is nonempty for every $\omega$ in the image of the map $f^*\colon H^0(A, \Omega_A^1)\to H^0(X, \Omega_X^1)$.
\end{theorem}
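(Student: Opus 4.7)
The plan is to split the theorem into three simpler claims: (i) if $f$ is smooth, the desired $\omega$ exists; (ii) if $A$ is an elliptic curve, existence of such $\omega$ forces $f$ to be smooth; (iii) existence of such $\omega$ forces $A$ to be an elliptic curve. Combining (i)--(iii) yields both (1) and (2).

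Claim (i) is immediate: any nonzero $\omega \in H^0(A, \Omega^1_A)$ is translation-invariant, hence nowhere vanishing on $A$, and smoothness of $f$ makes $f^*\colon \Omega^1_{A, f(x)} \to \Omega^1_{X, x}$ injective at every $x$, so $(f^*\omega)_x \neq 0$. For (ii), when $\dim A = 1$ the differential $df_x$ is either zero or surjective; the hypothesis $(f^*\omega)_x \neq 0$ rules out the first possibility, so $df_x$ is surjective for every $x$, which for smooth $X$ and $A$ is equivalent to $f$ being a smooth morphism. Hence the content lies entirely in (iii).

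For (iii), I would apply Theorem \ref{Theorem:main2} to obtain a minimal model $X^{\min}$ of $X$, a finite quasi-\'etale cover $X' \to X^{\min}$, and a $\mathbb{Q}$-factorialization $X'' \to X'$ with $X'' \cong Z \times E$, where $Z$ is a minimal variety of general type and $E$ is an elliptic curve. The birational and quasi-\'etale morphisms linking these varieties induce an isogeny $\Alb(Z) \times E = \Alb(X'') \to \Alb(X)$. Since $A$ is simple and $f$ is non-constant, the induced map $\Alb(X) \to A$ is surjective; composing with the isogeny gives a surjective homomorphism $\tilde h\colon \Alb(Z) \times E \to A$. The restriction $\tilde h|_{\{0\} \times E}\colon E \to A$ is a morphism between simple abelian varieties, so its image is either $0$ or all of $A$; in the latter case the kernel has nonnegative dimension $1 - \dim A$, forcing $\dim A = 1$, so $A$ is an elliptic curve.

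It remains to exclude the case $\tilde h|_{\{0\} \times E} = 0$. Here $\tilde h$ factors through the projection $\Alb(Z) \times E \to \Alb(Z)$, so the pullback of $\omega$ to $X'' = Z \times E$ has the form $p_Z^*\alpha$ for a nonzero $1$-form $\alpha$ on $Z$ coming from $\Alb(Z)$. A resolution $\tilde Z \to Z$ is smooth projective of general type, so Theorem \ref{thm:popa-schnell} applied to the composition $\tilde Z \to \Alb(Z) \to A$ forces the pullback $\tilde \alpha$ to vanish at some $\tilde z_0 \in \tilde Z$; hence $p_{\tilde Z}^*\tilde\alpha$ vanishes along $\{\tilde z_0\} \times E \subset \tilde Z \times E$. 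Working on a common smooth model $V$ that is birational to $\tilde Z \times E$ and sits over an \'etale cover of a smooth birational model of $X$, the pullbacks to $V$ of $f^*\omega$ (from $X$) and of $p_{\tilde Z}^*\tilde\alpha$ (from $\tilde Z \times E$) agree, since both arise from the same morphism $V \to A$ via the two factorizations; the transferred positive-dimensional zero locus then produces a zero of $f^*\omega$ on $X$, contradicting the hypothesis. The main obstacle is ensuring this zero locus is not entirely absorbed into the exceptional locus of $V \to X$; I expect to handle this by isolating a component of the preimage of $\{\tilde z_0\} \times E$ birational to $E$ and using that exceptional divisors of birational morphisms between smooth varieties have images of codimension $\ge 2$ in the target.
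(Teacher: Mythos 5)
Your proposal does not prove the statement at hand. Theorem \ref{thm:popa-schnell} asserts that the positivity hypothesis $H^0(X,\mathcal{O}_X(mK_X-f^*L))\neq 0$, for some integer $m\geq 1$ and some ample divisor $L$ on $A$, forces \emph{every} form in the image of $f^*$ to have a zero; there is no restriction on $\kappa(X)$, no simplicity assumption on $A$, and no claim about smoothness of $f$. Your claims (i)--(iii) instead reconstruct Theorem \ref{Theorem:mainD} of the paper (the smoothness criterion for morphisms to simple abelian varieties, under the extra hypothesis $\kappa(X)=\dim X-1$): the actual hypothesis of Theorem \ref{thm:popa-schnell} never enters your argument at any point, and the conclusion you aim at is not its conclusion. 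So even if every step were sound, you would have proved a different (and strictly weaker, more special) theorem.

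Worse, the argument is circular twice over. In step (iii) you explicitly ``apply Theorem \ref{thm:popa-schnell}'' to the composition $\tilde Z\to \Alb(Z)\to A$, i.e., you assume the very statement under proof; and you also invoke Theorem \ref{Theorem:main2}, whose proof runs through Theorem \ref{Theorem:main1} and Lemma \ref{prop:reduce to isotrivial}, both of which rest on Theorem \ref{thm:popa-schnell}. In this paper the theorem is not proved at all: it is quoted verbatim from \cite{PS14}, where it is established by entirely different techniques (vanishing theory for Hodge modules on abelian varieties). It is the external input on which all of the paper's structure theorems depend, so it cannot be derived from them. A correct treatment here would either cite \cite{PS14} or reproduce the Hodge-module argument; your sketch does neither, and no repair of the final birational-transfer step (the part about exceptional loci of $V\to X$) can fix the circularity at its root.
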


\subsection{Birational modification of elliptic fibration} In  \cite[Theorem 40]{GW19}, Grassi and Wen associate an elliptic fibration with a birational model that is easier to deal with. 

\begin{theorem}[Grassi-Wen]\label{Thm:GW-ellipt-modif}
	Let $\phi\colon X\to S$ be an elliptic fibration such that $X$ has $\mathbb{Q}$-factorial terminal
	singularities, $S$ is normal, and the canonical divisor $K_X=\phi^*L$ where $L$ is a $\mathbb{Q}$-Cartier divisor on $S$. Then one has the following commutative diagram
	$$
	\xymatrix{
		X \ar[d]_{\phi} \ar@{-->}[r]^-{\alpha} &Y\ar[ld]_-{\phi'}\ar[d]^-{\psi} \\
		S  & T\ar[l]^{\beta} ,
	}
	$$
	where $\alpha$ is a birational map, $\beta$ is a birational morphism, and $\psi$ is an elliptic fibration  together with an effective $\mathbb{Q}$-divisor $\Lambda_T$ such that:
	
	\begin{enumerate}
		\item \label{item:GW-ellipt-modif(1)} $Y$ has $\mathbb{Q}$-factorial terminal singularities,
		
		\item \label{item:GW-ellipt-modif(2)} $K_Y=\psi^*(K_T+\Lambda_T)={\phi'}^*L$ where $(T,\Lambda_T)$ is klt,
		
		\item \label{item:GW-ellipt-modif(3)} there is no effective divisor $E$ in $Y$ such that $\codim \psi(E)\geq2$.
	\end{enumerate}
\end{theorem}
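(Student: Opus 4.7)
The plan is to enlarge the base $S$ to a birational model $T$ that captures all ``hidden'' vertical divisors in $X$, and then run a relative MMP over $T$ to produce $Y$, with $\alpha\colon X\dashrightarrow Y$ being the resulting birational map.

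First I would isolate the obstruction to (3) on the original side: the finite set $\mathcal{E}$ of prime divisors $E\subset X$ with $\codim_S\phi(E)\geq 2$. Because $K_X=\phi^*L$, each such $E$ satisfies $K_X\cdot C=0$ for every curve $C\subset E$ that is contracted by $\phi$, so $E$ is vertically $K_X$-trivial and cannot be removed by any relative MMP over $S$. Thus the only way to eliminate these divisors while keeping the relation $K_Y=(\text{pullback from the base})$ is to refine the base itself.

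Next I would perform the base modification. Choose a birational morphism $\beta\colon T\to S$ from a normal variety such that every image $\phi(E)\subset S$, $E\in\mathcal{E}$, is contained in a $\beta$-exceptional divisor. Let $Y_0$ be a $\mathbb{Q}$-factorialization of the main component of the normalized fibre product $X\times_S T$, and run a relative $K_{Y_0}$-MMP over $T$; by \cite{BCHM10} this terminates at $\psi\colon Y\to T$ with $Y$ $\mathbb{Q}$-factorial and terminal, giving (1). Each original $E$ now dominates a $\beta$-exceptional divisor of $T$, so $\codim_T\psi(E)=1$, which is the mechanism behind (3). Finally, the canonical bundle formula of Fujino--Mori for the elliptic fibration $\psi$ produces an effective $\mathbb{Q}$-divisor $\Lambda_T$ with $(T,\Lambda_T)$ klt and $K_Y=\psi^*(K_T+\Lambda_T)$; comparing with $K_Y={\phi'}^*L$ gives $K_T+\Lambda_T\sim_{\mathbb{Q}}\beta^*L$, which is (2).

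The main obstacle is to guarantee that a single choice of $\beta$ (or a finite iteration) suffices to achieve (3). The fibre product step can itself introduce fresh hidden divisors sitting over the newly created $\beta$-exceptional centres, so one must either choose $\beta$ carefully enough on the first try or iterate the construction and show that the iteration terminates. The natural control is an invariant of the elliptic fibration — for instance, the complexity of the discriminant divisor together with the moduli part in the canonical bundle formula — that strictly decreases under each base modification, so that after finitely many refinements no new hidden divisor can appear and conditions (1)--(3) hold simultaneously.
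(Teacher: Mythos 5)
Your proposal has a genuine gap, and it is exactly the one you flag at the end: the termination of the base-modification process is the whole content of the theorem, and you do not prove it. Choosing $\beta\colon T\to S$ so that each $\phi(E)$ lies in a $\beta$-exceptional divisor does not by itself force $\codim_T\psi(E')=1$ for the strict transform $E'$ (for that you would need something like a flattening of $\phi$, which typically creates new exceptional divisors), and as you note, the fibre product plus relative MMP can reintroduce hidden divisors over the new centres. Your proposed decreasing invariant (``complexity of the discriminant divisor together with the moduli part'') is never defined, and there is no standard result making it strictly decrease; without it the iteration could a priori continue indefinitely. There is a second, quieter gap in your route to (2): a relative $K$-MMP over $T$ only yields $K_Y$ nef over $T$, not relatively numerically trivial, so you cannot directly apply a canonical bundle formula to write $K_Y=\psi^*(K_T+\Lambda_T)$; moreover the normalized fibre product step can destroy the crepant relation $K_Y={\phi'}^*L$, since the MMP you run may include divisorial contractions rather than isomorphisms in codimension one.

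The paper's proof (following Grassi--Wen) resolves both problems with a different order of operations that you should compare with. It does \emph{not} guess the base modification in advance: given a hidden divisor $E$, it chooses a Cartier divisor $C\subset S$ containing $\phi(E)$ with $\Bs(|C|)$ of codimension $\geq 2$, decomposes $\phi^*C=D+F$ with $F$ the maximal part having small image, and runs the relative MMP for $(X,\epsilon D)$ over the \emph{fixed} base $S$. Because $K_X=\phi^*L$ is relatively trivial, every step is a $D$-flop (all contracted curves lie in $D\cap\phi^{-1}(\Bs(|C|))$), so the crepant relation $K_Y={\phi'}^*L$ is preserved for free --- this is what your fibre-product construction loses. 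After the flops, $-F'=-{\phi'}^*C+D'$ is $\phi'$-semiample by Nakayama's relative semiampleness theorem \cite[Theorem A.4]{Na02}, and the associated contraction \emph{produces} the new base $\beta\colon T\to S$, with $F'=-\psi^*A$ for a $\beta$-ample $\mathbb{Q}$-divisor $A$; in particular the hidden divisor has become a pullback from $T$, and the relative Picard number strictly drops: $\rho(X/S)>\rho(Y/T)$. Induction on $\rho(X/S)$ then gives termination --- the clean decreasing invariant your sketch lacks --- and the klt pair $(T,\Lambda_T)$ with $K_Y=\psi^*(K_T+\Lambda_T)$ comes from Nakayama \cite[Theorem 0.4]{Na87} applied to $K_Y=\psi^*(\beta^*L)$, which is available precisely because the relation $K_Y={\phi'}^*L$ survived the flops.
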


\begin{remark}
	The above theorem is a higher dimensional generalization of \cite[Theorem A.1]{Na02} for threefolds. All the above claims are stated in  \cite[Theorem 40]{GW19} except for $K_Y=\phi'^*L$, which in fact follows from the proof.  For the convenience of the reader, we provide the full proof for the above theorem.
\end{remark}

\begin{proof}
We proceed by the induction on the relative Picard number $\rho(X/S)$. Suppose that there is an integral divisor $E$ on $X$ such that $\codim \phi(E)\geq 2$. Then one can choose a Cartier divisor $C\subset S$ containing $\phi(E)$, such that the base locus $Bs(|C|)$ is of codimension at least 2, and $\phi^*C=D+F$ with $F$ the maximal component so that $\codim \phi(F)\geq 2$ and $D=\phi^*C-F$. Thus one has $\supp(E)\subset \supp(F)$ and $\codim \phi(D)=1$. Since $K_X=\phi^*L$, $K_X+D$ is $\phi$-nef if and only if $D$ is $\phi$-nef.

In the case that $D$ is not $\phi$-nef, we consider the terminal log pair $(X, \epsilon D)$ for $0<\epsilon\ll 1$. Run the relative MMP for $(X, \epsilon D)$, we get $(X_1, \epsilon D_1)$ over $S$. Notice that the base locus of the linear system $|D|$ is of $\codim \geq 2$ in $X$ by the choice of $C$. If $l$ is a curve contracted in the step $\alpha_1\colon (X, \epsilon D)\dashedlongrightarrow (X_1, \epsilon D_1)$, one has $l\cdot D<0$ since $K_X$ is trivial over $S$ and $l$ is a $K_X+\epsilon D$-negative curve. Hence the contracted curves of $\alpha_1$ are contained in $D\cap \phi^{-1}(Bs(|C|))$, which is of codimension 2 in $X$. Thus $\alpha_1$ is a $D$-flop.

Runing the above program, we get a sequence of  flops and arrive at a birational model $(Y, \epsilon D')$ over $S$ 
	$$
\xymatrix{
	(X, \epsilon D) \ar[d]_-\phi \ar@{-->}[r]^-\alpha &(Y, \epsilon D')\ar[ld]^-{\phi'} \\
	S,
}
$$
so that $D'=\alpha_* D$ is nef over $S$,  $K_Y=\phi'^*L$,  and $Y$ has $\mathbb{Q}$-factorial terminal singularities.

Denote $F'\coloneqq \phi'^*C-D'$. Since $\codim \phi(F)\geq2$, $\codim \phi'(F')\geq2$. Also, $-F'=-\phi'^*C+D'$ is $\phi'$-semiample by \cite[Theorem A.4]{Na02}, then for a large integer $m$ the local system $|-mF'|$ on $Y$ gives a morphism $\psi\colon Y\to T$ and a morphism $\beta\colon T\to S$, i.e., a commutative diagram 	$$
\xymatrix{
	X \ar[d]_{\phi} \ar@{-->}[r]^-{\alpha} &Y\ar[ld]_-{\phi'}\ar[d]^-{\psi} \\
	S  & T\ar[l]^{\beta}.
}
$$ Note that $\beta$ is not an isomorphism, since $-F'=-\phi'^*C+D'$ is nummerically trivial over $T$ but not $S$ (Note that there exist $D'$-negative flopping curves on $Y$ contracted by $\phi'$). Note also $F'\cdot R=0$ for general fibres $R$ of $\phi'$. Hence $\psi$ is an elliptic fibration and $\beta$ is birational. Moreover, $K_Y=\psi^*(\beta^*L)$, thus there is an effective $\Q$-divisor $\Lambda_T$ on $T$ so that $(T, \Lambda_T)$ is klt and $K_Y=\psi^*(K_T+\Lambda_T)$ by \cite[Theorem 0.4]{Na87}. In the end, we notice that $\rho(X/S)>\rho(Y/T))$, since $\codim \phi(F)\geq2$ and $F'=-\psi^*A$ for some $\beta$-ample $\Q$-divisor $A$ on $T$. Hence by the induction on relative Picard numbers, we get a birational model $\psi\colon Y\to T$ satisfying $(1), (2), (3)$. 
\end{proof} 
 
\section{Proof of Main Theorems}

For an $n$-dimensional smooth projective variety of Kodaira dimension $\kappa(X)=n-1$, $X$ has a good minimal model $X^{\text{min}}$ together with a birational map $\tau: X\dashrightarrow X^{\text{min}}$ by \cite[Theorem 4.4]{Lai11}. Then the linear system $|mK_{X^{\text{min}}}|$ is base point free for a sufficiently divisable integer $m$. Hence the Iitaka fibration $\phi\coloneqq \phi_{|{mK_{X^{\text{min}}}}|}\colon X^{\text{min}}\to S$ is a morphism, whose general fibres are elliptic curves. 
Since $X^{\text{min}}$ has rational singularities, one has the commutative diagram 	$$
\xymatrix{
	X\ar@{-->}[d]_{\tau}\ar[rrd]^-{f}\\
	X^{\text{min}}\ar[rr]^-{f^{\text{min}}}&& A.}
$$
With the above notations and applying Theorem \ref{thm:popa-schnell}, we have

\begin{lemma}\label{prop:reduce to isotrivial}
	Let $X$ be an $n$-dimensional smooth projective variety with Kodaira dimension $\kappa(X)=n-1$, and $f\colon X\to A$ be a morphism to an abelian variety $A$. Assume that there exists a holomorphic 1-form $\omega\in H^0(A, \Omega_A^1)$ such that $f^*\omega$ has no zero on $X$, then  
	
	1) $f^{\text{min}}$ does not contract the fibres of Iitaka fibrations $\phi \colon X^{\text{min}}\to S$  of $X$, and 
	
	2) Iitaka fibrations of $X$ are generically isotrivial, i.e., general fibres of $\phi$ are isomorphic to each other.
\end{lemma}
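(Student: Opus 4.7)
The plan is to reduce both parts to Theorem \ref{thm:popa-schnell} applied to $f^{\text{min}}$, after transferring the required pluricanonical effectivity from the singular minimal model $X^{\text{min}}$ back to the smooth $X$.

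For part (1), I argue by contradiction. Assume $f^{\text{min}}$ contracts a general fibre of the Iitaka fibration $\phi\colon X^{\text{min}}\to S$. Since $\phi$ has connected fibres, rigidity produces a factorization $f^{\text{min}}=g\circ\phi$ for some morphism $g\colon S\to A$. For $m$ sufficiently divisible, $mK_{X^{\text{min}}}\sim\phi^{*}H$ with $H$ big on $S$ (because $\kappa(X^{\text{min}})=\dim S$), so for any ample divisor $L$ on $A$ and $k\gg 0$ the divisor $kH-g^{*}L$ is effective on $S$, giving an effective divisor linearly equivalent to $kmK_{X^{\text{min}}}-(f^{\text{min}})^{*}L$ on $X^{\text{min}}$. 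I then transfer this effectivity to $X$ via a common resolution $p\colon Y\to X$, $q\colon Y\to X^{\text{min}}$ with $f\circ p=f^{\text{min}}\circ q$: smoothness of $X$ yields $K_{Y}-p^{*}K_{X}$ effective and $p$-exceptional, and terminality of $X^{\text{min}}$ yields $K_{Y}-q^{*}K_{X^{\text{min}}}$ effective and $q$-exceptional. Pulling the effective divisor back through $q^{*}$ and pushing forward through $p_{*}$ (which preserves effectivity and annihilates the $p$-exceptional contribution) produces an effective divisor linearly equivalent to $kmK_{X}-f^{*}L$, so Theorem \ref{thm:popa-schnell} forces $f^{*}\omega$ to have a zero, contradicting the hypothesis and proving (1).

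For part (2), I combine (1) with a countability argument. The general fibre $F_{s}=\phi^{-1}(s)$ is a smooth elliptic curve (a $1$-dimensional fibre of the Iitaka fibration has Kodaira dimension zero), and by (1) the restriction $f^{\text{min}}|_{F_{s}}\colon F_{s}\to A$ is nonconstant. Its image is therefore a translate of a $1$-dimensional abelian subvariety $E_{s}\subseteq A$, and since $A$ contains only countably many such subvarieties while $S$ is irreducible, the underlying subgroup $E_{s}$ must be constant in $s$, equal to a fixed elliptic curve $E\subseteq A$. After suitable translation, each $F_{s}\to E$ becomes a surjective isogeny whose degree $d_{s}$ is locally constant, hence constant on $S$. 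As only finitely many elliptic curves admit a degree-$d$ isogeny onto $E$, connectedness of $S$ forces all general fibres $F_{s}$ to lie in a single isomorphism class, giving generic isotriviality.

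The main delicate point I anticipate is the effectivity transfer in part (1): the birationality of $\tau$ together with the singularities of $X^{\text{min}}$ prevent a direct pullback of sections, and one must combine the common resolution with the signs of the discrepancies on both sides (no negative discrepancy over $X$ since $X$ is smooth, and positive discrepancy over $X^{\text{min}}$ since it is terminal). All remaining steps are either direct invocations of Theorem \ref{thm:popa-schnell} or elementary facts about subvarieties of abelian varieties.
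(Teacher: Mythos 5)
Your proof is correct and follows essentially the same route as the paper: for (1), contradiction via factoring $f^{\text{min}}$ through the Iitaka fibration, producing a section of $kmK_X - f^*L$, and invoking Theorem \ref{thm:popa-schnell}; for (2), countability of abelian subvarieties of $A$ plus finiteness of isogenies of bounded degree. The only cosmetic differences are that you obtain $H^0(X^{\text{min}}, kmK_{X^{\text{min}}}-(f^{\text{min}})^*L)\neq 0$ directly from $mK_{X^{\text{min}}}\sim\phi^*H$ with $H$ ample on $S$ (where the paper cites Mori's results on Iitaka fibrations via a Stein factorization), and that you spell out the transfer of the pluricanonical section to $X$ via a common resolution and discrepancy signs, where the paper simply notes that $\tau$ extracts no divisor.
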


\begin{proof}
 Note first 1) implies 2). In fact, since $f^{\text{min}}$ does not contract the fibres of an Iitaka fibration $\phi \colon X^{\text{min}}\to S$, the fibres of $\phi$  map to translates of a fixed elliptic curve in $A$ via $f^{\text{min}}$. Also, note that $A$ only contains at most countably many abelian subvarieties. Hence general fibres of $\phi$ are isomorphic to each other.

Now we show statement 1). Suppose by contradiction that $f^{\text{min}}$  contracts the fibres of Iitaka fibrations $\phi \colon X^{\text{min}}\to S$. Since $S$ has klt singularities, in particular, $S$ has rational singularities, $f^{\text{min}}$ factors through $\phi$ together with a morphism $g:S\to A$. Hence we have the following commutative diagram 
	$$
	\xymatrix{
		X\ar@{-->}[d]_-{\tau}\ar[rrrrdd]^-{f}\\
		X^{\text{min}}\ar[d]_-{\phi}\ar[drr]_-{\mu\circ\phi}\ar[drrrr]^-{f^{\text{min}}}\\
		S \ar[rr]_-{\mu}&&S'\ar[rr]_-{g'} && A}
	$$
	where $\mu$ is the Stein factorization of $g$, $S'$ is normal and $g=g'\circ \mu$. Also, it is clear that the composition $\mu\circ\phi$ is the Stein factorization of $f^{\text{min}}$ and $(\mu\circ\phi)_*\mathcal{O}_{X^{\text{min}}}=\mathcal{O}_{S'}$. 
	By \cite[Proposition 1.14]{Mo87} and the proof of \cite[Definition-Theorem 1.1 (i)]{Mo87}, for any ample line bundle $H$ on $S'$, there is an integer $k\in\mathbb{N}$ such that 	$$H^0(X, \mathcal{O}_{X^{\text{min}}}(kK_{X^{\text{min}}}-(\mu\circ\phi)^*H))\neq 0.$$ Now we choose an ample  line bundle $L$ on $A$, then we get for some $k$
	$$H^0(X, \mathcal{O}_{X^{\text{min}}}(kK_{X^{\text{min}}}-{f^{\text{min}}}^*L))\neq 0,$$ since $g'$ is a finite morphism. Notice that $\tau$ does not extract any divisor, we then have for another integer $k'$
	$$H^0(X, \mathcal{O}_{X}(kk'K_{X}-f^*k'L))\neq 0.$$
	 Then by Theorem \ref{thm:popa-schnell} we get every holomorphic 1-form $\omega$ on $X$ has zero. This is a contradiction. 
\end{proof}

Also, we have the following lemma, which is a higher dimensional generalization of \cite[Lemma 5.5, Lemma 5.6]{HS21(1)}.

\begin{lemma} \label{lem:pi_1(X)}
	Let $\phi: X\rightarrow S$ be an elliptic fibration such that $X$ has $\mathbb{Q}$-factorial terminal singularities, $S$ is normal, and $K_X=\phi^*L$ with $L$ an effective $\Q$-Cartier divisor on $S$.
	Consider the birational morphism $\beta\colon T\to S$ and elliptic fibration $\psi :Y\to T$ in Theorem \ref{Thm:GW-ellipt-modif}.
	Then there is a smooth open subset $U\subset T$ such that 
	
	\begin{enumerate}
		\item \label{item:lem:pi_1(X)(1)} $\codim_T T\backslash U\geq 2$,
	
	\item \label{item:lem:pi_1(X)(2)} the preimage $Y_{U}\coloneqq{\psi}^{-1}(U)$ is nonsingular,
	\end{enumerate}
	
Moreover, for any open set $U$ satisfying (\ref{item:lem:pi_1(X)(1)}) and (\ref{item:lem:pi_1(X)(2)}), the birational map $Y_{U}\dashrightarrow X^{\text{sm}}$ induces an isomorphism
	$$
	\pi_1(Y_{U})\cong \pi_1(X^{\text{sm}}) ,
	$$ 
	where $X^{\text{sm}}\subset X$ denotes the smooth locus of $X$.
\end{lemma}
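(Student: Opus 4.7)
The plan is to first construct $U$ explicitly via a codimension count, and then compare the fundamental groups of $X^{\text{sm}}$ and $Y_U$ through the birational map $\alpha\colon X\dashrightarrow Y$ produced in the proof of Theorem~\ref{Thm:GW-ellipt-modif}, which is a composition of flops and hence an isomorphism in codimension one.

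For parts (1) and (2), I would define $U$ as the complement in $T$ of three closed subsets: $\text{Sing}(T)$, the image $\psi(Y^{\geq 2})$ of the locus $Y^{\geq 2}\subset Y$ where fibers have dimension $\geq 2$, and the image $\psi(\text{Sing}(Y))$ (intersected with the complement of $\psi(Y^{\geq 2})$). The first is of codimension $\geq 2$ because $(T,\Lambda_T)$ is klt, hence normal. The second is of codimension $\geq 2$ because a codimension-one $Z\subset T$ over which the fiber had dimension $\geq 2$ would satisfy $\dim\psi^{-1}(Z)\geq \dim T+1=\dim Y$, forcing $\psi^{-1}(Z)=Y$ and $Z=T$, a contradiction. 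The third is of codimension $\geq 2$ because $\text{Sing}(Y)$ has codimension $\geq 3$ by terminality of $Y$, and away from $\psi(Y^{\geq 2})$ the fibers of $\psi$ are one-dimensional. By construction $Y_U\subset Y^{\text{sm}}$, so $Y_U$ is nonsingular, and $U\subset T^{\text{reg}}$ is smooth.

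For the fundamental-group isomorphism, I would recall from the proof of Theorem~\ref{Thm:GW-ellipt-modif} that $\alpha$ restricts to an isomorphism $X^\circ\xrightarrow{\sim}Y^\circ$, where $X\setminus X^\circ$ and $Y\setminus Y^\circ$ have codimension $\geq 2$ (flops are isomorphisms in codimension one). Setting $X_0\coloneqq X^\circ\cap X^{\text{sm}}$ and $Y_0\coloneqq \alpha(X_0)\subset Y^{\text{sm}}$, the complements $X^{\text{sm}}\setminus X_0$ and $Y^{\text{sm}}\setminus Y_0$ remain of codimension $\geq 2$: for $Y_0$, one uses $\codim\text{Sing}(X)\geq 3$ transported through the isomorphism $\alpha|_{X^\circ}$, together with $\codim(Y\setminus Y^\circ)\geq 2$. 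Since for any smooth complex variety the fundamental group is unchanged upon removing a closed analytic subset of codimension $\geq 2$, one obtains $\pi_1(X_0)\cong\pi_1(X^{\text{sm}})$ and $\pi_1(Y_0)\cong\pi_1(Y^{\text{sm}})$; combined with the isomorphism $X_0\cong Y_0$, this yields $\pi_1(X^{\text{sm}})\cong\pi_1(Y^{\text{sm}})$.

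Finally, to identify $\pi_1(Y^{\text{sm}})$ with $\pi_1(Y_U)$, I would apply item~(\ref{item:GW-ellipt-modif(3)}) of Theorem~\ref{Thm:GW-ellipt-modif}: since $T\setminus U$ has codimension $\geq 2$, no divisor of $Y$ can be contracted into $T\setminus U$, so $\psi^{-1}(T\setminus U)$ contains no divisorial component, hence has codimension $\geq 2$ in $Y$. Consequently $Y^{\text{sm}}\setminus Y_U$ has codimension $\geq 2$ in $Y^{\text{sm}}$, and the same $\pi_1$-stability gives $\pi_1(Y_U)\cong\pi_1(Y^{\text{sm}})$. The main obstacle is the codimension bookkeeping in the first step: conditions (\ref{item:GW-ellipt-modif(1)})--(\ref{item:GW-ellipt-modif(3)}) of Theorem~\ref{Thm:GW-ellipt-modif} must be combined with the precise codimensions of $\text{Sing}(X)$, $\text{Sing}(Y)$, and $\text{Sing}(T)$ to keep every relevant subset of codimension at least two; once this is established, the comparison of fundamental groups is essentially formal via the codimension-two purity statement for $\pi_1$ of smooth complex varieties.
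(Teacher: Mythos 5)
Your proposal is correct and follows essentially the same route as the paper: choose $U$ by discarding $\Sing T$, the non-equidimensional locus, and the image of $\Sing Y$ (all of codimension $\geq 2$ by terminality and normality), then combine the fact that $X$ and $Y$ are isomorphic in codimension one with item~(\ref{item:GW-ellipt-modif(3)}) and $\codim\Sing X,\codim\Sing Y\geq 3$ to see that $Y_U$ and $X^{\text{sm}}$ agree outside codimension~two, and conclude by $\pi_1$-invariance under removal of codimension-$\geq 2$ closed subsets of smooth varieties. The only cosmetic differences are that you justify the isomorphism in codimension one via the flop construction while the paper cites the general statement for birational minimal models over $S$, and you interpose $\pi_1(Y^{\text{sm}})$ as an extra intermediate step.
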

\begin{proof} By Theorem \ref{Thm:GW-ellipt-modif} (\ref{item:GW-ellipt-modif(1)}) (\ref{item:GW-ellipt-modif(2)}), the singular locus $\Sing Y$ is of codimension at least 3 in $Y$ (see e.g., \cite[Corollary 5.18]{KM08}) and $\Sing T$ is of codimension at least 2 in $T$. Then one can choose a smooth open subset $U\subset T$ such that $\codim_T T\backslash U\geq 2$ and $\psi^{-1}(U)$ is smooth.
	
We show the last claim for such an open set $U$.
	Since $K_Y$ is nef over $S$ by Theorem \ref{Thm:GW-ellipt-modif} (\ref{item:GW-ellipt-modif(2)}), $X$ and $Y$ are birational minimal models over $S$ and so they are  isomorphic in codimension one (see e.g.\ \cite[Theorem 3.52(2)]{KM08}).  By Theorem \ref{Thm:GW-ellipt-modif} (\ref{item:GW-ellipt-modif(3)}), $\dim Y\backslash Y_U\leq \dim Y-2$, since $\codim_T T\backslash U\geq2$.
Hence $Y_U$ and $X^{\text{sm}}$ are isomorphic in codimension one (notice that $\codim_X\Sing X\geq 3$).
Since $Y_U$ and $X^{\text{sm}}$ are smooth, one get $\pi_1(Y_{U})\cong \pi_1(X^{\text{sm}})$.
\end{proof}

Now we consider a smooth projective variety $X$ of dimension $n$ and Kodaira dimension $n-1$. Let $X^{\text{min}}$ be a minimal model of $X$. Consider an Iitaka fibration $$\phi\coloneqq \phi_{|mK_{X^{\text{min}}}|}\colon X^{\text{min}}\to S.$$ We have the following theorem.

\begin{theorem}\label{Theorem:main1} 
	Let $X$ be an $n$-dimensional smooth projective variety with Kodaira dimension $\kappa(X)=n-1$, and $f\colon X\to A$ be a morphism to an abelian variety $A$. Assume that there exists a holomorphic 1-form $\omega\in H^0(A, \Omega_A^1)$ such that $f^*\omega$ has no zero on $X$, then for any minimal model $X^{\text{min}}$ of $X$ there exists a finite quasi-\'etale covering $X'\to X^{\text{min}}$ such that 
	
	1) $X'$ is birational to $S'\times E$, where $E$ is an elliptic curve and $S'$ is a smooth projective variety with a generically finite rational map to the base $S$ of an Iitaka fibration of $X^{\text{min}}$, and
	
	2)  The second projection $p_2\colon X' \dashrightarrow E$ fits into the commutative diagram $$\xymatrix{
		X' \ar[r]\ar@{-->}[d]_-{p_2} &X^{\text{min}}\ar[d]^-{f^{\text{min}}} \\
		E\ar[d]^u &A\ar[ld]^q\\
	E_0
}$$ where $u$ is an isogeny between elliptic curves, and $q$ is a surjective morphism.
	
\end{theorem}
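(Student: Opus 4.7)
The plan is to reduce the problem to a normal-form elliptic fibration via Grassi--Wen, then trivialize that fibration after a finite étale base change, and finally descend the resulting cover to $X^{\text{min}}$ using the $\pi_1$-control from Lemma \ref{lem:pi_1(X)}.

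First, I would apply Lemma \ref{prop:reduce to isotrivial} to deduce that the Iitaka fibration $\phi\colon X^{\text{min}}\to S$ is generically isotrivial, with general fibres isomorphic to a fixed elliptic curve $E$, and that $f^{\text{min}}$ restricted to any fibre is non-constant. Since $A$ contains only countably many abelian subvarieties and each fibre of $\phi$ maps to a translate of a fixed elliptic subvariety, there exists an elliptic subvariety $E_0\subseteq A$ and, using Poincaré reducibility up to isogeny, a surjective morphism $q\colon A\twoheadrightarrow E_0$ such that $q\circ f^{\text{min}}$ is constant on each fibre of $\phi$, hence factors as $g\circ \phi$ for some $g\colon S\to E_0$.

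Next, I would invoke Theorem \ref{Thm:GW-ellipt-modif} on the Iitaka fibration $\phi$ (using that $K_{X^{\text{min}}}\sim_{\mathbb{Q}}\phi^*L$ for an effective $\mathbb{Q}$-Cartier $L$ since $X^{\text{min}}$ is a good minimal model) to obtain the diagram
\[
\xymatrix{
X^{\text{min}}\ar@{-->}[r]^-{\alpha}\ar[d]_-{\phi} & Y\ar[d]^-{\psi}\ar[ld]_-{\phi'}\\
S & T\ar[l]^{\beta},
}
\]
where $\alpha$ is a composition of flops (so $X^{\text{min}}$ and $Y$ are isomorphic in codimension one), and $\psi$ has the three properties listed. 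By Lemma \ref{lem:pi_1(X)}, fix a smooth open $U\subset T$ with $\operatorname{codim}_T(T\setminus U)\geq 2$, with $Y_U=\psi^{-1}(U)$ smooth, and with $\pi_1(Y_U)\cong \pi_1((X^{\text{min}})^{\text{sm}})$.

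The key step is the trivialization of $\psi_U\colon Y_U\to U$. Since $\psi_U$ is proper and smooth with all fibres smooth elliptic curves, and since generic isotriviality propagates to all smooth fibres, every fibre is isomorphic to $E$. The monodromy of the $j$-invariant lies in the finite group $\operatorname{Aut}(E,0)$, so it is killed by a finite étale cover $U_1\to U$. After this cover, $\psi$ pulls back to an $E$-torsor over $U_1$; the crucial input is that the non-vanishing form $f^*\omega$ restricts to a nowhere-zero invariant $1$-form on every fibre, hence gives a global non-vanishing section of $\psi_{U_1}^*\Omega^1_{Y_{U_1}/U_1}$, and combined with the $E_0$-valued map $g\circ\beta$ it allows one to produce a section of the torsor after a further finite étale cover $V\to U_1$. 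Consequently $Y_U\times_U V\cong V\times E$.

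Finally I would descend this cover to $X^{\text{min}}$. The finite étale cover $V\to U$ corresponds to a subgroup of $\pi_1(Y_U)$, which via the isomorphism in Lemma \ref{lem:pi_1(X)} corresponds to a subgroup of $\pi_1((X^{\text{min}})^{\text{sm}})$, hence to a finite étale cover of $(X^{\text{min}})^{\text{sm}}$. Taking the normalization of $X^{\text{min}}$ in this field extension yields a finite quasi-étale cover $X'\to X^{\text{min}}$ (étale in codimension one because $\operatorname{codim}_{X^{\text{min}}}\operatorname{Sing}(X^{\text{min}})\geq 3$ by terminality). Pulling back the birational map $\alpha$ and using $Y_U\times_U V\cong V\times E$ shows that $X'$ is birational to $S'\times E$ where $S'$ is a smooth projective model of $V$; by construction $S'\dashrightarrow V\to U\subset T\to S$ is generically finite. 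For the commutative diagram, the second projection $p_2$ restricted to a general fibre $\{s'\}\times E$ agrees with the identification of that fibre with the normalization of a general fibre of $\phi$; then paragraph one forces $q\circ f^{\text{min}}$ to restrict on such a fibre to an isogeny $E\to E_0$, and by isotriviality and rigidity of maps between elliptic curves this isogeny $u$ is independent of the fibre, giving commutativity.

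The main obstacle I anticipate is the trivialization step: once the $j$-invariant monodromy is killed by a finite étale cover, one must also trivialize the resulting $E$-torsor by a further finite étale base change. The existence and non-vanishing of $f^*\omega$, together with the factorization through $E_0$ provided by paragraph one, is precisely what is needed to produce a section (or at least to kill the obstruction class after a finite étale cover), so that the residual torsor class is trivialized rather than merely rigidified. A secondary technical point is making the descent to $X^{\text{min}}$ quasi-étale rather than only étale in codimension two; this is handled by Lemma \ref{lem:pi_1(X)} combined with the fact that flops are isomorphisms in codimension one.
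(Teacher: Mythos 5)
Your overall architecture (reduce to the isotrivial case, pass to the Grassi--Wen model $\psi\colon Y\to T$, trivialize over a finite \'etale cover, descend via $\pi_1(Y_U)\cong\pi_1(X^{\text{sm}})$ and quasi-\'etale extension) matches the paper's. But the step you yourself flag as the ``main obstacle'' is precisely where the paper does its real work, and your proposal does not close it. First, your claim that $\psi|_U\colon Y_U\to U$ is a \emph{smooth} morphism with all fibres smooth elliptic curves is unjustified and false in general: an isotrivial minimal elliptic fibration with smooth total space can have multiple fibres (multiples of smooth elliptic curves) over divisors in the base, and Lemma \ref{lem:pi_1(X)} only guarantees that $Y_U$ and $U$ are smooth, not that the map between them is. Second, even after killing the finite monodromy, a nowhere-vanishing relative $1$-form only reduces the structure group to translations; it does not trivialize the resulting $E$-torsor, and ``a further finite \'etale cover $V\to U_1$'' over which a section appears is exactly what must be \emph{constructed}, not asserted. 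The paper's construction is concrete: dualizing $E_0\hookrightarrow A$ gives $p\colon Y\to E_0$ which restricts to an isogeny on every fibre of $\psi$, so a general fibre $\widetilde T=p^{-1}(e)$ is a multisection of $\psi$; base-changing along $\widetilde T\to T$ and applying \cite[Lemma 5.11]{HS21(1)} simultaneously kills the multiple fibres (producing an \'etale cover $\widetilde Y_{\widetilde U}\to Y_U$) and yields a smooth elliptic bundle \emph{with a section}, which the fine moduli space of elliptic curves with level structure then trivializes. Without this multisection your argument has no mechanism for trivializing the torsor class.

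There is also an internal inconsistency in your first paragraph: you assert that $q\circ f^{\text{min}}$ is \emph{constant} on the fibres of $\phi$ and factors through $S$, but Lemma \ref{prop:reduce to isotrivial}(1) says $f^{\text{min}}$ does not contract those fibres, they map to translates of $E_0$, and the dual projection $q$ restricts to an isogeny on $E_0$ --- so $q\circ f^{\text{min}}$ is an isogeny, not constant, on each fibre (as your own final paragraph requires). The map that is constant on fibres is the projection to the complementary factor of $E_0$, not $q$. This should be corrected, since the non-constancy of $q\circ f^{\text{min}}$ on fibres is exactly what makes $p^{-1}(e)$ a multisection and drives the whole trivialization.
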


\begin{proof}
	By Theorem \ref{Thm:GW-ellipt-modif}, for the Iitaka fibration $\phi\coloneqq \phi_{|mK_{X^{\text{min}}}|}\colon X^{\text{min}}\to S$, there is a birational morphism $\beta\colon T\to S$ and an elliptic fibration $\psi\colon Y\to T$  that is birational to $\phi$ and satisfies (\ref{item:GW-ellipt-modif(1)}), (\ref{item:GW-ellipt-modif(2)}), (\ref{item:GW-ellipt-modif(3)}) of Theorem \ref{Thm:GW-ellipt-modif}.
	By Lemma \ref{lem:pi_1(X)}, there is a smooth open subset $U\subset T$ such that $\codim_TT\backslash U\geq2$, $Y_{U}\coloneqq{\psi}^{-1}(U)$ is smooth and the birational map $Y_{U}\dashrightarrow X^{\text{sm}}$ induces an isomorphism 
	\begin{align} \label{eq:pi_1(X)}
	\pi_1(Y_{U})\cong \pi_1(X^{\text{sm}}),
	\end{align} where $X^{\text{sm}}$ is the smooth loci of $X^{\text{min}}$. 	By Theorem \ref{Thm:GW-ellipt-modif} (\ref{item:GW-ellipt-modif(3)}), there exists an open subset $V\subset T$ with $\codim_TT\backslash V\geq3$ so that $\psi$ is equidimenisonal with one dimensional fibres over $V$. Hence we may assume in the beginning that $\psi|_U\colon Y_U\to U$ is equidimensional for the above chosen $U$.
	
By Theorem \ref{Thm:GW-ellipt-modif}, $X^{\text{min}}$ and $Y$ have rational singularities. Since $X$ admits a nowhere vanishing holomorphic 1-form, we have that fibres of $\phi\colon X^{\text{min}}\to S$ are not contracted by the induced map $f^{\text{min}}\colon {X^{\text{min}}}\to A$ according to Lemma \ref{prop:reduce to isotrivial}. Since $X^{\text{min}}$ is birational to $Y$, we also have the induced morphism $f'\colon Y\to A$. Hence fibres of $\psi:Y\to T$ are not contracted by $f'\colon Y\to A$.  
	The fibres of $\psi$ are mapped to translates of a fixed elliptic curve $E_0\subset A$ via $f'$.
	Since $A$ is an abelian variety, we can dualize this inclusion to get a surjection $A\to A^{\vee}\twoheadrightarrow E_0$.  
	Composing this morphism with $f'$, we get a surjection 
	$$
	p\colon Y\longrightarrow E_0 ,
	$$
	which restricts to finite \'etale covers on general fibres of $\psi$.
	Taking the Stein factorization, we may assume that $p$ has connected fibres. In fact, one can take the Stein factorization $q\colon A\to E_0$ of $A\to A^{\vee}\twoheadrightarrow E_0$ and compose it with $f'$. 

	Since $Y$ is terminal, $\codim_Y \Sing Y\geq3$.
	Hence $\codim_{\widetilde T}\Sing\widetilde T\geq 3$ for a general fibre $\widetilde T=p^{-1}(e)$ of $p$, by the generic smoothness theorem. Note that we have an induced generically finite morphism $$\psi|_{\widetilde T}\colon \widetilde T\to T,$$ and 
	$\psi|_{\widetilde T}$ is a finite morphism over $U$, because $\psi$ is equidimensional over $U$. Since $\codim_{\widetilde T}\Sing\widetilde T\geq 3$, we may assume that for the previously chosen $U$, $\psi|_{\widetilde T}^{-1}(U)$ is a smooth open subset in $\widetilde T$. Now we consider the normalization $\widetilde Y$ of the base change
$Y\times_{T}\widetilde T$ and the corresponding commutative diagram
	$$
	\xymatrix{ 
		\widetilde Y\ar[r]\ar[d]_{\widetilde \psi} &Y\ar[d]^{\psi}\\
		\widetilde T \ar[r]^{\psi|_{\widetilde T}} &T .} 
	$$
	We denote $\widetilde U\coloneqq 
	\psi|_{\widetilde T}^{-1}(U)\subset \widetilde T$ and consider the base change $\widetilde Y_{\widetilde U}=\widetilde \psi^{-1}(\widetilde U)\subset \widetilde Y$.
	Since additionally $K_{Y}$ is nef over $T$, the base change $Y_Z$ to a general complete intersection curve $Z\subset T$ is an isotrivial smooth minimal elliptic surface by Lemma \ref{prop:reduce to isotrivial} 2). Because the fibres of $\psi$ are mapped onto translates of a fixed elliptic curve $E_0$ in $A$, all the singular fibres are multiples of smooth elliptic curves (see e.g., \cite[p.\ 201]{BHPV04} for the classification of singular fibres).
    Thus all singular fibres of $\psi$ are multiples of smooth elliptic curves over codimension one point of $T$.  Now choose a general complete intersection curve $C\subset U$ and let $\widetilde C\subset \widetilde U$ be the preimage of $C$ in $\widetilde U$.
	Applying Lemma \cite[Lemma 5.11]{HS21(1)} to the base change of $\widetilde Y_{\widetilde U}$ and $Y_{U}$ to $\widetilde C$ and $C$, respectively, we find the following: up to removing a codimension two closed subset from $U$, we may assume that $\widetilde Y_{\widetilde U}\to \widetilde U$ is a smooth elliptic fibre bundle and $\widetilde Y_{\widetilde U}\to Y_{U}$ is \'etale.
	Since this bundle has a section by construction, the existence of a fine moduli space for elliptic curves with level structure shows that $\widetilde Y_{\widetilde U}\cong \widetilde U\times E$ for an elliptic curve $E$, which is isogeny to $E_0$.
	
	By \cite[Theorem 3.8]{GKP16}, any finite \'etale cover of $X^{\text{sm}}$ extends to a finite quasi-\'etale cover of $X^{\text{min}}$.
	Since
	$
	\pi_1(Y_{U})\cong \pi_1(X^{\text{sm}})
	$, the finite \'etale cover $\widetilde Y_{\widetilde U}\to Y_{U}$ is thus birational to a finite quasi-\'etale covering
	$$
	X'\to X^{\text{min}}
	$$
	of $X^{\text{min}}$.
	Since $\widetilde Y_{\widetilde U}\cong \widetilde U\times E$, we conclude that $X'$ is birational to $S'\times E$, where $S'$ is a smooth projective variety birational to $\widetilde U$ and $E$ is an elliptic curve.
	
Note that for general $s\in S'$, we have that the morphism $$\{s\}\times E\hookrightarrow X'\to X^{\text{min}}\overset{f^{\text{min}}}{\longrightarrow} A\overset{q}{\longrightarrow} E_0$$ is an isogeny. Hence 2) holds true.
\end{proof}	

\begin{theorem}\label{Theorem:main2}
Let $X$ be an $n$-dimensional smooth projective variety of Kodaira dimension $n-1$. If $X$ has a nowhere vanishing holomorphic 1-form, then for any minimal model $X^{\text{min}}$ of $X$ there is a finite quasi-\'etale covering $X'\to X^{\text{min}}$ such that any $\Q$-factorialization $X''$ of $X'$ is a product $X''\cong Z\times E$, where $Z$ is a minimal model of general type and $E$ is an elliptic curve.
\end{theorem}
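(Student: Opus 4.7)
The plan is to combine Theorem \ref{Theorem:main1} with the minimal model program to show that, after $\Q$-factorialization, the product structure identified birationally on a finite quasi-\'etale cover is realized on the nose. Applying Theorem \ref{Theorem:main1} to the Albanese morphism $X\to\Alb(X)$ (any nowhere vanishing holomorphic $1$-form on $X$ pulls back from $\Alb(X)$), I obtain a finite quasi-\'etale covering $\tau\colon X'\to X^{\text{min}}$ such that $X'$ is birational to $S'\times E$, with $E$ an elliptic curve and $S'$ a smooth projective variety generically finite over the Iitaka base of $X^{\text{min}}$. Since quasi-\'etale covers preserve Kodaira dimension, $\kappa(S')+\kappa(E)=\kappa(X^{\text{min}})=n-1$, hence $\kappa(S')=n-1=\dim S'$, so $S'$ is of general type. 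BCHM then provides a $\Q$-factorial terminal minimal model $Z$ of $S'$ with $K_Z$ nef and big.

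Next, take any $\Q$-factorialization $X''\to X'$. Since quasi-\'etale covers and small $\Q$-factorializations both preserve terminal singularities, $X''$ is $\Q$-factorial terminal; its canonical divisor is the pullback of $K_{X^{\text{min}}}$ and hence nef, so $X''$ is itself a $\Q$-factorial terminal minimal model. The product $Z\times E$ is also such, with $K_{Z\times E}=p_1^*K_Z$ nef, and it is birational to $X''$. By the standard consequence of BCHM that any two $\Q$-factorial terminal minimal models in the same birational equivalence class are connected by a finite sequence of flops, $X''$ can be obtained from $Z\times E$ by such a sequence.

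The core step is to prove that every flop of $Z\times E$ is again a product $Z^+\times E$, where $Z^+$ is a flop of $Z$. Let $\pi\colon Z\times E\to W$ be a flopping contraction with flopping extremal ray $R$. Applying the cone theorem to $K_{Z\times E}+\epsilon D$ for a small $\Q$-divisor $D$ with $D\cdot R<0$, $R$ is generated by a rational curve $C\subset Z\times E$. Since $E$ is an abelian variety, it contains no rational curves, so $p_2(C)$ is a point and $C$ lies in a single fiber $Z\times\{e\}$. Consequently $\pi$ is constant on fibers of $p_2$ and $p_2$ descends to a morphism $q\colon W\to E$ with $p_2=q\circ\pi$. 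Restricting $\pi$ to a general fiber of $q$ yields a small birational contraction of $Z$; by invariance under the translation action of $E$ on $Z\times E$ (which is compatible with $\pi$ because the ray $R$ is $E$-invariant), this contraction globalizes to $\pi=\pi_0\times\id_E$, $W=W_0\times E$, and the flop of $\pi$ is $\pi_0^+\times\id_E\colon Z^+\times E\to W_0\times E$, where $\pi_0^+$ is the flop of $\pi_0\colon Z\to W_0$.

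By induction on the number of flops connecting $Z\times E$ to $X''$, we conclude $X''\cong Z'\times E$ for some $Z'$ obtained from $Z$ by flops; since flops preserve $\Q$-factorial terminal minimality and Kodaira dimension, $Z'$ is a minimal model of general type, as required. The principal obstacle lies in the core step, namely the globalization from \emph{every flopping curve lies in a fiber of $p_2$} to \emph{the entire flopping contraction, and hence its flop, is a product}. Rigorously, one must show that the flopping contraction $\pi$ is $E$-equivariant (which amounts to $E$-invariance of the extremal ray), and then descend the $E$-action to $W$, ultimately identifying $W$ with $W_0\times E$ and the flop with a fiber-wise flop. This could be done either via the uniqueness of the flop combined with $E$-invariance, or by presenting the flop as $\Proj$ of a relatively $E$-equivariant graded ring of sections.
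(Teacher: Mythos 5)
Your proposal is correct and follows essentially the same route as the paper: apply Theorem \ref{Theorem:main1} to the Albanese map, observe $S'$ is of general type, connect $X''$ to $S'_{\min}\times E$ by flops via \cite{Ka08}, and show each flop of a product with $E$ is again a product because flopping curves are rational and hence lie in fibers of the projection to $E$, with the translation action of $E$ (which fixes the extremal ray) forcing the contraction, and by uniqueness the flop, to be a product. The ``principal obstacle'' you flag is exactly what the paper resolves by sweeping the flopped locus with the $E$-action and invoking uniqueness of flops.
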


\begin{proof}
 Fix a minimal model $X^{\text{min}}$ of $X$, applying Theorem \ref{Theorem:main1} to the Albanese map of $X$, we have that there exists a finite quasi-\'etale covering $X'\to X^{\text{min}}$ such that 
 $X'$ is birational to $S'\times E$, where $E$ is an elliptic curve and $S'$ is a smooth projective variety. 
 
 	Since $K_{X^{\text{min}}}$ is nef, so is $K_{X'}$.
Moreover, $X'$ is terminal by \cite[Proposition 5.20]{KM08}, because it is a finite quasi-\'etale cover. Also, it is clear that $\kappa(X')=\kappa(X)=n-1$. Thus $S'$ is of general type.
Let $S'_{\text{min}}$ be a minimal model of $S'$. Then $X''$ and $S'_{\text{min}}\times E$ are birational minimal models and so they are connected by a sequence of flops (see \cite{Ka08}).  Consider any flop of $S'_{\text{min}}\times E$ as in the following commutative diagram
$$
\xymatrix{ 
	S'_{\text{min}}\times E \ar[rd]_{a}\ar@{-->}[rr]&&  V\ar[ld]^{a^+}\\
	&W.} 
$$ 
Since any (rational) flopping curve in $S'_{\text{min}}\times E$ projects to a point on $E$, then the group action of $E$ can sweep up a trivial family of rational curves over $E$, which is contracted by the small contraction $a$. Hence $W=M\times E$ for some projective variety $M$, and $a$ induces a flopping contraction $a_0\colon S'_{\text{min}}\to M$. Therefore we get the flop $S''_{\text{min}}$ of $S'_{\text{min}}$ with respect to $a_0$, where $S''_{\text{min}}$ is another minimal variety which is birational to $S'_{\text{min}}$. By the uniqueness of flops, we have that $V\cong S''_{\text{min}}\times E$.
Hence $X''\cong Z\times E$, where $Z$ is a minimal model of general type. 
\end{proof}

When $X$ is an $n$-dimensional smooth minimal model with Kodaira dimension $\kappa(X)=n-1$. we have the following stronger theorem.

\begin{theorem}\label{Theorem:main3}
	Let $X$ be an $n$-dimensional minimal smooth  projective variety with Kodaira dimension $\kappa(X)=n-1$, and $f\colon X\to A$ be a morphism to an abelian variety $A$. Then the following are equivalent
	
	(1) There exists a holomorphic 1-form $\omega\in H^0(A, \Omega_A^1)$ such that $f^*\omega$ has no zero on $X$.
	
	(2) $X$ admits a smooth morphism $\varphi\colon X\to E_0$, where $E_0$ is an elliptic curve, such that $\varphi$ fits into the commutative diagram
	$$\xymatrix{
		X  \ar[d]_-{\varphi} \ar[r]^f&A \ar[ld]^-q\\
		E_0
	}$$
where $q$ is a surjective morphism. Moreover, there is a finite \'etale covering $\tau\colon X' \to X$ such that $X'\cong Z\times E$, where $Z$ is a smooth minimal model of general type, and  for any closed point $z\in Z$ the following composition $$\{z\}\times E\hookrightarrow Z\times E\overset{\tau}{\longrightarrow} X \overset{\varphi}{\longrightarrow} E_0$$ is an isogeny.
\end{theorem}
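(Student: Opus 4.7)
The direction $(2)\Rightarrow(1)$ is immediate: choose any nowhere vanishing $1$-form $\alpha\in H^0(E_0,\Omega^1_{E_0})$ (which exists since $E_0$ is an elliptic curve) and set $\omega:=q^{*}\alpha\in H^0(A,\Omega^1_A)$. Then $f^{*}\omega=\varphi^{*}\alpha$, which is nowhere vanishing on $X$ because $\varphi$ is smooth.

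For $(1)\Rightarrow(2)$, the strategy is to specialize Theorem \ref{Theorem:main2} and Theorem \ref{Theorem:main1} to the case where $X$ is already a smooth minimal model, so that no $\mathbb{Q}$-factorialization is required and the finite quasi-\'etale cover produced is honestly \'etale. Namely, take $X^{\min}=X$ in Theorem \ref{Theorem:main2}. Because $X$ is smooth, the quasi-\'etale covering $\tau\colon X'\to X$ it produces is \'etale (as recalled in Section \ref{Section:2}), hence $X'$ is smooth, $\mathbb{Q}$-factorial and serves as its own $\mathbb{Q}$-factorialization. Therefore Theorem \ref{Theorem:main2} yields $X'\cong Z\times E$, where $Z$ is a smooth minimal model of general type and $E$ is an elliptic curve.

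Next I would read off $\varphi$ from the diagram of Theorem \ref{Theorem:main1}. That theorem, applied to $f$, supplies a surjective morphism $q\colon A\to E_0$ onto an elliptic curve $E_0$ and an isogeny $u\colon E\to E_0$ such that, as morphisms on $X'\cong Z\times E$, the composition
\[
Z\times E \xrightarrow{\tau} X \xrightarrow{f} A \xrightarrow{q} E_0
\]
coincides with $u\circ p_2$, where $p_2\colon Z\times E\to E$ is the second projection. Define $\varphi:=q\circ f\colon X\to E_0$. Then $\varphi\circ\tau=u\circ p_2$, which is smooth (composition of a smooth projection and an \'etale isogeny). Since $\tau$ is \'etale and surjective, smoothness descends along $\tau$ and $\varphi$ itself is smooth. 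The required commutative triangle is built into the definition of $\varphi$. For the moreover part, for any closed point $z\in Z$ the restriction of $u\circ p_2$ to $\{z\}\times E$ is precisely the isogeny $u$, which gives the claimed composition.

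The main subtlety lies in the interaction of Theorems \ref{Theorem:main1} and \ref{Theorem:main2}: Theorem \ref{Theorem:main1} produces the crucial isogeny-type factorization but only up to birational equivalence, while Theorem \ref{Theorem:main2} upgrades the covering structure to a genuine isomorphism $X'\cong Z\times E$. Matching these two outputs, and confirming that $\varphi\circ\tau=u\circ p_2$ really holds as a morphism (so that smoothness of $\varphi$ can be deduced), is where the minimality and smoothness of $X$ must be used essentially; once this is verified, the descent of smoothness along an \'etale surjective morphism is routine.
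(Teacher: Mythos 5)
Your reduction to the case $X^{\min}=X$ is fine and matches the paper: since $X$ is smooth, the quasi-\'etale cover $\tau\colon X'\to X$ is \'etale, so $X'$ is smooth and is its own $\Q$-factorialization, and the flop argument of Theorem \ref{Theorem:main2} gives $X'\cong Z\times E$ with $Z$ a smooth minimal model of general type. The gap is in the step you yourself flag as the ``main subtlety'': the identity $\varphi\circ\tau=u\circ p_2$ that you want to read off from Theorem \ref{Theorem:main1} is not justified, and in fact it is false in general. What the construction in Theorem \ref{Theorem:main1} actually gives is that for (general) $z$ the restriction of $q\circ f\circ\tau$ to $\{z\}\times E$ is an isogeny onto $E_0$ \emph{up to translation}; globally one only gets $q\circ f\circ\tau(z,e)=u(e)+t(z)$ for some morphism $t\colon Z\to E_0$, and $t$ need not be constant even though $Z$ is of general type (a general type variety can surject onto an elliptic curve --- take $X=Z\times E$ with $Z$ admitting a surjection $t\colon Z\to E_0$ and $f(z,e)=u(e)+t(z)$; then $f^*\omega$ is nowhere vanishing but $q\circ f\circ\tau\neq u\circ p_2$). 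So you cannot conclude smoothness of $\varphi\circ\tau$ by asserting it literally equals the smooth map $u\circ p_2$.

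The paper closes this gap by a different, direct argument on $Z\times E$: write $\tau^*f^*\omega=p_1^*\omega_1+p_2^*\omega_2$; since $Z$ is of general type, $\omega_1$ vanishes somewhere by Popa--Schnell, so the nowhere-vanishing of $\tau^*f^*\omega$ forces $\omega_2\neq 0$; hence each $\{z\}\times E$ maps non-constantly onto a translate of a fixed elliptic curve $E_0\subset A$, one defines $q$ by dualizing $E_0\hookrightarrow A$, and $\varphi\circ\tau$ is a submersion because its differential is already surjective in the $E$-directions. Your conclusion (smoothness of $\varphi$ and the isogeny statement) does survive, but only via this kind of fibrewise argument, not via the exact identity $\varphi\circ\tau=u\circ p_2$. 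You should replace that identity with the decomposition-of-forms argument (or at least with the weaker statement $\varphi\circ\tau(z,e)=u(e)+t(z)$, observing that this is still a submersion and still restricts to an isogeny up to translation on each $\{z\}\times E$).
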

 
\begin{proof}
(2)$\Rightarrow$(1) is trivial. If (1) holds for $X$, by Theorem \ref{Theorem:main1}, we know that there is a  quasi-\'etale covering $\tau\colon X'\to X$ such that $X'$ is birational to $S'\times E$. Since $X$ is smooth and of Kodaira dimension $n-1$, $\tau$ is a finite \'etale covering by \cite[Corollary 2.4]{HS21(2)} and $S'$ is of general type. Then $S'$ admits a minimal model $S'_{\text{min}}$ by \cite{BCHM10}. Hence $S'_{\text{min}}\times E$ is a minimal model which is birational to the smooth minimal model $X'$. Therefore there is a sequence of flops connecting $X'$ and $S'_{\text{min}}\times E$. 
According to the argument in Theorem \ref{Theorem:main2}, we have $X'\cong Z\times E$, where $Z$ is a smooth minimal model of general type. 
 
Now for any closed point $z\in Z$, consider the following composition of morphisms $$\{z\}\times E\hookrightarrow Z\times E\overset{\tau}{\longrightarrow} X \overset{f}{\longrightarrow} A.$$ Since $f^*\omega$ has no zero on $X$, $\tau^*f^*\omega$ has no zero on $Z\times E$. We write $\tau^*f^*\omega=p_1^*\omega_1+p_2^*\omega_2$, where $\omega_1\in H^0(Z, \Omega_Z^1)$, $\omega_2\in H^0(E, \Omega_E^1)$ and $p_1$, $p_2$ are the natural projections. Since $Z$ is of general type, $\omega_1$ has zeros on $Z$ by \cite{PS14}. Therefore $\omega_2\neq 0$ and $\{z\}\times E$ map to translates of a fixed elliptic curve $E_0$ in $A$ for all $z\in Z$. We can take the dual morphism $A^{\vee}\onto E_0$ of the inclusion $E_0\hookrightarrow A$ and get the required smooth morphism $\varphi$ as the composition $$X\to A \to A ^{\vee}\to E_0.$$ Note that $\varphi$ is smooth, since $\varphi\circ\tau$ is smooth and $\tau$ is \'etale.
 
 \end{proof}

\begin{proof}[Proof of Theorem \ref{Corollary:mainA}]
Applying Theorem \ref{Theorem:main3} to the Albanese morphism $\alpha_X\colon X\to A_X$, we prove Theorem \ref{Corollary:mainA}.
\end{proof}
 
 \begin{proof}[Proof of Theorem \ref{Theorem:mainD}]
 	We apply Theorem \ref{Theorem:main1} to the morphism $f\colon X\to A$ with $A$ simple. Then for any minimal model $X^{\text{min}}$ of $X$ there exists a finite quasi-\'etale covering $X'\to X^{\text{min}}$ such that $X'$ is birational to $S'\times E$, where $E$ is an elliptic curve, and the second projection $p_2\colon X' \dashrightarrow E$ fits into the commutative diagram $$\xymatrix{
  	X' \ar[r]\ar@{-->}[d]_-{p_2} &X^{\text{min}}\ar[d]^-{f^{\text{min}}} \\
  	E\ar[d]^u &A\ar[ld]^q\\
  	E_0
  }$$ where $u$ is an isogeny between elliptic curves. Since $A$ is simple and $q$ is surjective, $A$ must be an elliptic curve. Also, since there is a holomorphic 1-form $\omega\in H^0(A, \Omega^1_A)$ such that $f^*\omega$ has no zero and $\dim A=1$, we have that $f$ is a submersion, i.e., $f$ is a smooth morphism.
  
 \end{proof}

Similar argument also proves Corollary \ref{Corollary:mainA}.

\begin{proof}[Proof of Corollary \ref{Corollary:mainA}]
Assume by contradiction that there exists a nowhere vanishing holomorphic 1-form on $X$, we have that there is a surjective morphism $q\colon A_X\to E_0$ from the Albanese variety $A_X$ of $X$ to an elliptic curve $E_0$ by Theorem \ref{Theorem:main1}. This contradicts the assumption in the corollary.
\end{proof}

\end{document}